\newcommand{\bi}{{\bf i}}
\newcommand{\bj}{{\bf j}}
\newcommand{\bk}{{\bf k}}
\newcommand{\bc}{{\mathbb C}}
\newcommand{\br}{{\mathbb R}}
\newcommand{\bh}{{\mathbb H}}
\newtheorem{thm}{Theorem}[section]
\newtheorem{lem}{Lemma}[section]
\newtheorem{pro}{Proposition}[section]
\newtheorem{cor}{Corollary}[section]
\newtheorem{rem}{Remark}[section]
\newtheorem{defi}{Definition}[section]
\newtheorem{exam}{Example}[section]
\begin{document}

\title{The Moore-Penrose inverses of split quaternions}
\author{ Wensheng Cao,  Zhenhu Chang \\
School of Mathematics and Computational Science,\\
Wuyi University, Jiangmen, Guangdong 529020, P.R. China\\
e-mail: {\tt wenscao@aliyun.com}\\
}
\date{}
\maketitle

\bigskip
{\bf Abstract} \,\,  In this paper, we find the roots of lightlike quaternions.  By introducing the concept of the Moore-Penrose inverse  in split quaternions, we solve the linear equations $axb=d$, $xa=bx$ and $xa=b\bar{x}$.  Also we obtain necessary and sufficient conditions for two split quaternions to be similar or consimilar.

\vspace{1mm}\baselineskip 12pt

{\bf Keywords and phrases:} \ \ Roots of lightlike quaternion, Moore-Penrose inverse, similar, consimilar

{\bf MR(2000) Subject Classifications:}\ \ {\rm 15A33; 11R52}

\section{Introduction}

Let $\br$ and $\bc$ be the field of  real numbers and  complex numbers, respectively.  The split quaternions are elements of a 4-dimensional associative algebra introduced by James Cockle  \cite{cock}  in 1849.
Split quaternions can be represented as
$$\bh_s=\{q=q_0+q_1\bi+q_2\bj+q_3\bk,q_i\in \br,i=0,1,2,3\},$$
where $1,\bi,\bj,\bk$ are basis of $\bh_s$ satisfying the equalities:
\begin{equation}\label{rule}
\bi^2=-\bj^2=-\bk^2=-1, \bi\bj=\bk=-\bj\bi,\bj\bk=-\bi=-\bk\bj, \bk\bi=\bj=-\bi\bk.
\end{equation}
  Let $\bar{q}=q_0-q_1\bi-q_2\bj-q_3\bk$ be the conjugate of $q$ and   $I_q=\bar{q}q=q\bar{q}=q_0^2+q_1^2-q_2^2-q_3^2.$
A split quaternion is spacelike, timelike or lightlike if $I_q<0, I_q > 0$ or $I_q = 0$, respectively.
 We define $\Re(q)=(q+\bar{q})/2=q_0$ to be the real part of $q$ and $\Im(q)=(q-\bar{q})/2=q_1\bi+q_2\bj+q_3\bk$ to be the imaginary part of $q$. It is easy to verify that $I_{pq}=I_pI_q.$  Let $q'=q_0-q_1\bi+q_2\bj+q_3\bk$ be the prime of $q$. Then $I_{q'}=I_q$.
For $q=q_0+q_1\bi+q_2\bj+q_3\bk$, we define \begin{equation}\label{kp}K(q)=\Im(q)^2=-q_1^2+q_2^2 +q_3^2.\end{equation}
Obviously $\bc=\br+\br\bi\subset \bh_s$ and $\bj z=\bar{z}\bj$ for $z\in \bc.$  A split quaternion can be written  as $$q=(q_0+q_1\bi)+(q_2+q_3\bi)\bj =z_1+z_2\bj,z_1,z_2\in \bc.$$
Denote $\overrightarrow{x}=(x_0,x_1,x_2,x_3)^T$  for
$x=x_0+x_1\bi+x_2\bj+x_3\bk\in \bh_s$, where $^T$ denotes the transpose of a  matrix.  Using the multiplication rule (\ref{rule}), we have the following formulas:
$$\overrightarrow{qx}=L_q \overrightarrow{x},\overrightarrow{xq}=R_q \overrightarrow{x},$$
 where $$
L(q)=\left(\begin{array}{cccc}
q_0 &-q_1  &q_2  &q_3  \\
q_1 &q_0   & q_3  & -q_2  \\
q_2 & q_3  &q_0   &-q_1  \\
q_3 &  -q_2 &q_1   & q_0
\end{array}\right),\ \  R(q)=\left(\begin{array}{cccc}
q_0 &-q_1  &q_2  &q_3  \\
q_1 &q_0   & -q_3  & q_2  \\
q_2 & -q_3  &q_0   &q_1  \\
q_3 &  q_2 &-q_1   & q_0
\end{array}\right).
$$

 Unlike the Hamilton quaternion algebra, the split quaternions contain nontrivial zero divisors, nilpotent elements, and idempotents. Accordingly we define the following sets of split quaternions:
 \begin{equation}
 Z(\bh_s)=\{q\in \bh_s:I_q=0\};
 \end{equation}
\begin{equation}
N(\bh_s)=\{q\in \bh_s:\mbox{there is an integer}\ n\  \mbox{such that}\ q^n=0\};
\end{equation}
 \begin{equation}
 I(\bh_s)=\{q\in \bh_s:q^2=q\}.
 \end{equation}

 In this paper we mainly concentrate on  the properties of lightlike  split quaternions or some singular matices induced by split quaternions.  In Section \ref{rootse}, we will obtain the characteristics of the sets $ Z(\bh_s),N(\bh_s), I(\bh_s)$ and  find the roots of lightlike quaternions.  In Section \ref{mpinvsec}, we will obtain some properties of  the Moore-Penrose inverse and solve the linear equation $axb=d$.    In Section \ref{simsec}, we solve the linear equations $xa=bx$ and $xa=b\bar{x}$. As applications, we  obtain some necessary and sufficient conditions for two split quaternions to be similar or consimilar.

\section{The root of a split quaternion in  $Z(\bh_s)$}\label{rootse}

\begin{pro}\label{zdivision}  $q^2=2\Re(q)q-I_q,  q\in \bh_s$ and   	
 \begin{equation}q^n=(2\Re(q))^{n-1}q, q\in Z(\bh_s). \end{equation}
\end{pro}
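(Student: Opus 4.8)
The plan is to prove the quadratic identity first and then bootstrap the power formula from it by induction. For the first claim, the cleanest route is to observe that the conjugate satisfies $\bar{q} = 2\Re(q) - q$, since $\bar{q} = q_0 - \Im(q)$ while $q = q_0 + \Im(q)$. Multiplying this relation on the left by $q$ and invoking the stated identity $I_q = q\bar{q}$ gives $I_q = q\,(2\Re(q) - q) = 2\Re(q)\,q - q^2$, which rearranges at once to $q^2 = 2\Re(q)\,q - I_q$. One could instead verify this by the direct expansion $q^2 = (q_0 + \Im(q))^2 = q_0^2 + 2q_0\,\Im(q) + \Im(q)^2$ and substitute $\Im(q)^2 = K(q) = -q_1^2 + q_2^2 + q_3^2$ from (\ref{kp}); the conjugate argument is preferable because it sidesteps the componentwise bookkeeping.

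For the second claim I would specialize the quadratic identity to $q \in Z(\bh_s)$. By definition $I_q = 0$ for such $q$, so the identity collapses to $q^2 = 2\Re(q)\,q$, and the power formula then follows by induction on $n$. The base case $n = 1$ is the tautology $q = q$, and $n = 2$ is exactly this collapsed identity. Assuming $q^n = (2\Re(q))^{n-1}\,q$, I compute $q^{n+1} = q\cdot q^n = (2\Re(q))^{n-1}\,q^2 = (2\Re(q))^{n-1}\cdot 2\Re(q)\,q = (2\Re(q))^{n}\,q$, which closes the induction. Since $2\Re(q)$ is a real scalar it is central, so it passes freely through the products and no difficulty arises from the noncommutativity of $\bh_s$.

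I do not expect a genuine obstacle: both parts rest on the single relation $\bar{q} = 2\Re(q) - q$ together with the multiplicativity of $I_q$ already recorded in the excerpt. The only step meriting attention is confirming that $\Im(q)^2$ yields the sign pattern $-q_1^2 + q_2^2 + q_3^2$ forced by $\bi^2 = -1$, $\bj^2 = \bk^2 = 1$, with the three mixed terms vanishing because each pair among $\bi,\bj,\bk$ anticommutes; but this is precisely the content of (\ref{kp}) and requires no fresh computation.
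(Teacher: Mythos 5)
Your proof is correct and takes essentially the same route as the paper: both arguments rest on the relation $\bar{q}=2\Re(q)-q$ combined with $q\bar{q}=I_q$ to obtain $q^2=2\Re(q)q-I_q$, and then specialize to $I_q=0$ to get the power formula. The only difference is that you spell out the induction on $n$ (and the centrality of the real scalar $2\Re(q)$) explicitly, where the paper leaves this iteration implicit.
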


 \begin{proof} It is obvious that $q^2=q(2\Re(q)-\bar{q})=2\Re(q)q-I_q$.  If $q\in Z(\bh_s)$ then $I_q=0$, and therefore
 $q^n=(2\Re(q))^{n-1}q.$
 \end{proof}

 \begin{pro}\label{nhs} $N(\bh_s)=\{q=q_1\bi+q_2\bj+q_3\bk\  \mbox{with}\ q_1^2-q_2^2-q_3^2=0  \}.$
\end{pro}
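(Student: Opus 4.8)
The plan is to recognize that the claimed set is precisely the set of purely imaginary lightlike split quaternions: writing $q=q_1\bi+q_2\bj+q_3\bk$ forces $\Re(q)=q_0=0$, and then $I_q=q_0^2+q_1^2-q_2^2-q_3^2=q_1^2-q_2^2-q_3^2$, so the condition $q_1^2-q_2^2-q_3^2=0$ is exactly $I_q=0$. Thus I want to show $N(\bh_s)=\{q\in\bh_s:\Re(q)=0,\ I_q=0\}$, and I would do this by proving the two inclusions, using only the two identities from Proposition \ref{zdivision} together with the multiplicativity $I_{pq}=I_pI_q$ noted in the introduction.

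For the inclusion $\supseteq$ I would take any $q$ of the stated form, observe that $\Re(q)=0$ and $I_q=q_1^2-q_2^2-q_3^2=0$, and then invoke the first formula of Proposition \ref{zdivision} to get $q^2=2\Re(q)q-I_q=0$. Hence $q$ is nilpotent (indeed $q^2=0$), so $q\in N(\bh_s)$. This direction is immediate once the set is reinterpreted as above.

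For the inclusion $\subseteq$ I would start from $q\in N(\bh_s)$, so $q^n=0$ for some integer $n$. Applying the multiplicative invariant gives $I_q^{\,n}=I_{q^n}=I_0=0$, whence $I_q=0$ and therefore $q\in Z(\bh_s)$. Now the second formula of Proposition \ref{zdivision} applies and yields $q^n=(2\Re(q))^{n-1}q=0$. If $q=0$ it already lies in the target set; otherwise the scalar factor must vanish, so $(2\Re(q))^{n-1}=0$, forcing $\Re(q)=0$. Combined with $I_q=0$, this gives $q=q_1\bi+q_2\bj+q_3\bk$ with $q_1^2-q_2^2-q_3^2=0$, as required.

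The step I expect to be the crux of the argument is the reverse inclusion, and specifically the deduction $\Re(q)=0$. Its cleanness depends entirely on having first established $I_q=0$, because only then does the collapsed power formula $q^n=(2\Re(q))^{n-1}q$ from Proposition \ref{zdivision} become available; without it one would have to grapple with the recursion coming from $q^2=2\Re(q)q-I_q$ directly. I would also be careful to treat the degenerate cases (the exponent $n=1$ and the quaternion $q=0$) explicitly, since the scalar-vanishing argument only produces $\Re(q)=0$ when $n\ge 2$ and $q\neq 0$.
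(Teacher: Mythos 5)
Your proof is correct and takes essentially the same route as the paper: deduce $I_q=0$ from the multiplicativity $I_{q^n}=I_q^n$, then invoke Proposition \ref{zdivision} to force $\Re(q)=0$. The only difference is that you also spell out the reverse inclusion (via $q^2=2\Re(q)q-I_q=0$) and the degenerate cases $n=1$ and $q=0$, which the paper's proof leaves implicit.
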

\begin{proof} Let $q=q_0+q_1\bi+q_2\bj+q_3\bk\in N(\bh_s)$. Then $I_q=0$ and therefore $q\in Z(\bh_s)$. By proposition \ref{zdivision}, $\Re(q)=0$ which implies that $I_q=q_1^2-q_2^2-q_3^2=0$.
\end{proof}

  \begin{pro}$I(\bh_s)=\{0,1\}\cup \{\frac{1}{2}+q_1\bi+q_2\bj+q_3\bk\  \mbox{with}\ \frac{1}{4}+q_1^2-q_2^2-q_3^2=0  \}.$
 \end{pro}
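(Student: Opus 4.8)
The plan is to linearize the idempotent equation. By Proposition \ref{zdivision} we have $q^2 = 2\Re(q)q - I_q$, so the condition $q^2 = q$ is equivalent to
\[
(2\Re(q)-1)\,q = I_q .
\]
The key observation is that the right-hand side $I_q = \bar q q$ is a \emph{real scalar}, so this single identity controls everything. I would then split into two cases according to whether the scalar coefficient $2\Re(q)-1$ vanishes.

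If $2\Re(q)-1 \neq 0$, dividing gives $q = I_q/(2\Re(q)-1) \in \br$, so $q$ must be a real number; the only reals with $q^2 = q$ are $0$ and $1$, which produces the first piece $\{0,1\}$. If instead $2\Re(q)-1 = 0$, i.e. $q_0 = \Re(q) = \tfrac12$, the equation collapses to $I_q = 0$. Writing $q = \tfrac12 + q_1\bi + q_2\bj + q_3\bk$ and expanding $I_q = q_0^2 + q_1^2 - q_2^2 - q_3^2$, the condition $I_q = 0$ reads $\tfrac14 + q_1^2 - q_2^2 - q_3^2 = 0$, which is precisely the defining relation of the second piece.

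For the reverse inclusion I would check that each listed element really is idempotent: $0$ and $1$ are immediate, and for any $q$ with $q_0 = \tfrac12$ satisfying $\tfrac14 + q_1^2 - q_2^2 - q_3^2 = 0$ we have $I_q = 0$, so Proposition \ref{zdivision} gives at once $q^2 = 2\Re(q)q - I_q = q$. There is no serious obstacle here; the only point requiring care is recognizing that $I_q \in \br$, which is exactly what forces $q$ to be real in the first case and turns the whole argument into a clean two-case analysis rather than a coordinate-by-coordinate computation.
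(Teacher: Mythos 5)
Your proof is correct, and it takes a genuinely different route from the paper's. The paper proves this by brute-force coordinates: expanding $q^2=q$ into the four real equations $q_0^2-q_1^2+q_2^2+q_3^2=q_0$ and $2q_0q_i=q_i$ ($i=1,2,3$), then splitting on whether $q_1^2+q_2^2+q_3^2$ vanishes --- if some imaginary coordinate is nonzero the linear equations force $q_0=\tfrac12$ and the real equation becomes $\tfrac14+q_1^2-q_2^2-q_3^2=0$, and otherwise $q$ is real, hence $0$ or $1$. You instead reuse Proposition \ref{zdivision} to linearize: $q^2=q$ is equivalent to $(2\Re(q)-1)q=I_q$, and you split on the scalar $2\Re(q)-1$. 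Since $I_q$ is real and nonzero reals are central and invertible, the case $2\Re(q)-1\neq 0$ forces $q\in\br$ and hence $q\in\{0,1\}$, while the case $q_0=\tfrac12$ collapses the equation to $I_q=0$, which is the stated relation. This is precisely the technique the paper itself uses to characterize $N(\bh_s)$ in Proposition \ref{nhs}, so your argument is actually more uniform with the surrounding text than the paper's own proof; it stays coordinate-free until the single final expansion of $I_q$, and you make both inclusions explicit, whereas the paper leaves the reverse inclusion implicit in the reversibility of its coordinate equations. What the paper's version buys in exchange is self-containedness: it needs no appeal to the earlier proposition. Both case splits are exhaustive and both arguments are gap-free.
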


 \begin{proof} Let $q=q_0+q_1\bi+q_2\bj+q_3\bk\in I(\bh_s)$. By $q^2=q$, we have  the following equations:
 	$$q_0^2-q_1^2+q_2^2+q_3^2=q_0,2q_0q_1=q_1,2q_0q_2=q_2,2q_0q_3=q_3.$$ If $q_1^2+q_2^2+q_3^2\neq 0$ then $q_0=\frac{1}{2}$ and   $\frac{1}{4}+q_1^2-q_2^2-q_3^2=0$.  If  $q_1^2+q_2^2+q_3^2=0$ then  $q$ is $0$ or $1$.
\end{proof}

Let $q=z_1+z_2\bj\in  Z(\bh_s)$. Then $I_q=|z_1|^2-|z_2|^2=0$, and therefore $q$ can be written as  $$q=r(e^{\bi\alpha}+e^{\bi\beta}\bj),r\ge 0,\alpha,\beta\in [0,2\pi).$$

  \begin{thm}\label{root}Let $q=r(e^{\bi\alpha}+e^{\bi\beta}\bj),r>0,\alpha,\beta\in [0,2\pi)$ and $n\ge 2$.  Then the equation $w^n=q$ has solution in the following cases:


  (1) $w=\sqrt[n]{\frac{r}{(2\cos\alpha)^{n-1}}}(e^{\bi\alpha}+e^{\bi\beta}\bj)$ if $\cos\alpha>0$; $\mbox{or}\ \cos\alpha<0,n \ \mbox{is  odd}$;

  (2) $w=\sqrt[n]{\frac{r}{(2\cos\alpha)^{n-1}}}(-e^{\bi\alpha}-e^{\bi\beta}\bj)$ if $\cos\alpha>0,n \ \mbox{is  even} $.
 \end{thm}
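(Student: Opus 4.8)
The plan is to reduce everything to Proposition \ref{zdivision} after first showing that any solution must itself be lightlike. Since $I_{pq}=I_pI_q$, induction gives $I_{w^n}=I_w^{\,n}$, so $w^n=q$ together with $I_q=0$ forces $I_w^{\,n}=0$, hence $I_w=0$, i.e. $w\in Z(\bh_s)$. Because $r>0$ gives $q\neq 0$, the solution cannot be $0$, so I may write $w=s(e^{\bi\gamma}+e^{\bi\delta}\bj)$ with $s>0$ and $\gamma,\delta\in[0,2\pi)$ uniquely determined, exactly as in the normal form recorded just before the theorem.

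Next I apply Proposition \ref{zdivision}. Since $\Re(w)=s\cos\gamma$, it collapses the power to a real multiple of $w$: $w^n=(2s\cos\gamma)^{n-1}w=(2s\cos\gamma)^{n-1}s\,(e^{\bi\gamma}+e^{\bi\delta}\bj)$. Setting this equal to $q=r(e^{\bi\alpha}+e^{\bi\beta}\bj)$ and comparing the two complex components $z_1,z_2$ separately, I get $Ce^{\bi\gamma}=re^{\bi\alpha}$ and $Ce^{\bi\delta}=re^{\bi\beta}$, where $C=(2s\cos\gamma)^{n-1}s$ is real. Taking moduli and using $r>0$ yields $|C|=r$, so $C=\pm r$, and this single dichotomy organizes the whole argument.

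The remaining work is a sign and parity analysis of the two cases. If $C=r$, then $e^{\bi\gamma}=e^{\bi\alpha}$ and $e^{\bi\delta}=e^{\bi\beta}$ force $\gamma=\alpha$, $\delta=\beta$, and the constraint $s^n(2\cos\alpha)^{n-1}=r$ admits a positive real root $s=\sqrt[n]{r/(2\cos\alpha)^{n-1}}$ precisely when $(2\cos\alpha)^{n-1}>0$, that is when $\cos\alpha>0$, or $\cos\alpha<0$ with $n$ odd; this produces case (1). If $C=-r$, then $e^{\bi\gamma}=-e^{\bi\alpha}$ and $e^{\bi\delta}=-e^{\bi\beta}$ give $\gamma=\alpha+\pi$, $\delta=\beta+\pi$, so $\cos\gamma=-\cos\alpha$ and the constraint becomes $s^n(2\cos\alpha)^{n-1}=(-1)^n r$; a positive $s$ exists only for $n$ even together with $\cos\alpha>0$, giving $w=s(-e^{\bi\alpha}-e^{\bi\beta}\bj)$ as in case (2). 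Finally $\cos\alpha=0$ makes $C=0$, which is incompatible with $|C|=r>0$, so it yields no solution, matching the fact that the theorem lists none.

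The main obstacle is the bookkeeping that pairs the parity of $n$ with the sign of $\cos\alpha$ to decide exactly when the real $n$-th root $\sqrt[n]{r/(2\cos\alpha)^{n-1}}$ is defined; the algebra itself is routine once Proposition \ref{zdivision} reduces $w^n$ to $(2\Re(w))^{n-1}w$. Since each step above is reversible, the forward analysis already proves both necessity and sufficiency, but to be fully safe I would substitute the two candidate values of $w$ back into Proposition \ref{zdivision} to reconfirm $w^n=q$ directly.
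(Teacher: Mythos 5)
Your proof is correct and takes essentially the same route as the paper: show any root lies in $Z(\bh_s)$ via $I_{w^n}=I_w^n=0$, write it in the lightlike normal form, collapse $w^n$ with Proposition \ref{zdivision}, and match components to get the same sign/parity case analysis. The only cosmetic difference is that you compare the two complex components and reduce to the dichotomy $C=\pm r$, where the paper compares four real equations and uses $\tan\alpha_1=\tan\alpha$; the resulting cases are identical.
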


\begin{proof}
	Let $w$ be a root of the equation $w^n=q$. Then $(I_w)^n=I_q=0$ and therefore $w\in Z(\bh_s)$. So $w$ can be written as  $w=r_1(e^{\bi\alpha_1}+e^{\bi\beta_1}\bj),r_1>0,\alpha_1,\beta_1\in [0,2\pi)$.  It follows from Proposition \ref{zdivision} that
	\begin{equation}(2r_1\cos\alpha_1)^{n-1}r_1(e^{\bi\alpha_1}+e^{\bi\beta_1}\bj)=r(e^{\bi\alpha}+e^{\bi\beta}\bj).\end{equation}
That is
	$$(2r_1\cos\alpha_1)^{n-1}r_1\cos\alpha_1=r\cos\alpha,\ \ (2r_1\cos\alpha_1)^{n-1}r_1\sin\alpha_1=r\sin\alpha,$$	$$(2r_1\cos\alpha_1)^{n-1}r_1\cos\beta_1=r\cos\beta,\ \ (2r_1\cos\alpha_1)^{n-1}r_1\sin\beta_1=r\sin\beta.$$
It is obvious that there is no solution if $\cos\alpha=0$.  So we only need to consider the case $\cos\alpha\neq 0$. Thus $\tan \alpha_1=\tan \alpha.$   In this case, the above four equations are solvable if and only if one of the following conditions holds:
\begin{itemize}
  \item [(1)] $\alpha_1=\alpha,\beta_1=\beta,r_1=\sqrt[n]{\frac{r}{(2\cos\alpha)^{n-1}}} \ \mbox{provided}\ \cos\alpha>0;$

  \item [(2)]$\alpha_1=\alpha\pm \pi\in [0,2\pi),\beta_1=\beta\pm \pi\in [0,2\pi) ,r_1=\sqrt[n]{\frac{r}{(2\cos\alpha)^{n-1}}} \ \mbox{provided}\ \cos\alpha>0,n \ \mbox{is  even} ;$
        \item [(3)]$\alpha_1=\alpha,\beta_1=\beta,r_1=\sqrt[n]{\frac{r}{(2\cos\alpha)^{n-1}}} \ \mbox{provided}\ \cos\alpha<0,n \ \mbox{is  odd} .$
\end{itemize}
The above three cases conclude the proof of Theorem \ref{root}.
\end{proof}

\begin{rem}
	Proposition \ref{nhs}, Theorem \ref{root} and the results in \cite{moz} show how to find  the roots of any split quaternions.
\end{rem}

\section{The Moore-Penrose inverse of elements in $\bh_s$}\label{mpinvsec}

We recall that the Moore-Penrose inverse of a complex matrix $A$ is the unique  complex matrix $X$ satisfying the following equations:
$$AXA=A,XAX=X,(AX)^*=AX,(XA)^*=XA,$$
where $^*$ is the conjugate of a  matrix.  We denote the Moore-Penrose inverse of $A$ by $A^+$. The following lemma is well known.
\begin{lem}\label{general} Let $A\in \br^{m\times n},b\in
	\br^{m}$ and $E_n$ the identity matrix of order $n$. Then the linear equation $Ax=b$ has a solution if and only if $AA^{+}b=b$, furthermore the general solution is $$x=A^{+}b+(E_n-A^{+}A)y, \forall y\in \br^{n}.$$
\end{lem}
\begin{defi} The  Moore-Penrose inverse of  $a=c_1+c_2\bj\in
\bh_s$ is defined to be
$$a^+=\left\{\begin{array}{ll}
0, & \hbox{provided a=0;} \\
\frac{\bar{c_1}-c_2\bj}{|c_1|^2-|c_2|^2}=\frac{\bar{a}}{I_a}, & \hbox{provided $|c_1|^2-|c_2|^2\neq 0$;} \\
\frac{\bar{c_1}+c_2\bj}{4|c_1|^2}, & \hbox{provided $|c_1|^2-|c_2|^2=0$.} \\
\end{array}%
\right.$$
\end{defi}
For $0\neq a=c_1+c_2\bj\in  Z(\bh_s)$, we have the following facts:
 \begin{equation}\label{cinvf}aa^+a=a,a^+aa^+=a^+,aa^+=\frac{1}{2}(1+\frac{c_2}{\bar{c_1}}\bj),a^+a=\frac{1}{2}(1+\frac{c_2}{c_1}\bj).
 \end{equation}
Since $L(a)R(b)=R(b)L(a)$ for any $a,b\in \bh_s$, We can verify directly the following proposition.
\begin{pro}\label{mpprop} Let  $a=a_0+a_1\bi+a_2\bj+a_3\bk\in
	\bh_s$. Then
	$$L(a)L(a^+)L(a)=L(a), L(a^+)L(a)L(a^+)=L(a^+),L(a)L(a^+)=\big(L(a)L(a^+)\big)^*,L(a^+)L(a)=\big(L(a^+)L(a)\big)^*;$$
	$$R(a)R(a^+)R(a)=R(a), R(a^+)R(a)R(a^+)=R(a^+),R(a)R(a^+)=\big(R(a)R(a^+)\big)^*,R(a^+)R(a)=\big(R(a^+)R(a)\big)^*.$$
	\begin{equation}\label{qinvv}L(a)^+=L(a^+),\ R(b)^+=R(b^+),\ \big(L(a)R(b)\big)^+=L(a^+)R(b^+).\end{equation}
\end{pro}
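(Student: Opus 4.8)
The plan is to reduce every one of these quaternionic identities to the already-known Moore--Penrose relations for complex matrices, exploiting the fact that $L$ and $R$ are algebra (anti)homomorphisms. First I would record the multiplicativity of the representation maps: for all $p,q\in\bh_s$ one has $\overrightarrow{pqx}=L(pq)\overrightarrow{x}=L(p)L(q)\overrightarrow{x}$, hence $L(pq)=L(p)L(q)$, and similarly $R(pq)=R(q)R(p)$ because right multiplication reverses order. These are verified by a single matrix multiplication using the rule (\ref{rule}), so I would state them as the basic lemma and not grind through the entries. Once I have $L(pq)=L(p)L(q)$, the three algebraic identities $a a^+a=a$, $a^+aa^+=a^+$ (which hold in $\bh_s$ by the defining cases of $a^+$ and by (\ref{cinvf}) in the lightlike case) translate verbatim into $L(a)L(a^+)L(a)=L(a)$ and $L(a^+)L(a)L(a^+)=L(a^+)$, and likewise for $R$. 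So the first block of equalities costs essentially nothing beyond the homomorphism property.

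The genuine content is the two Hermitian conditions $L(a)L(a^+)=\bigl(L(a)L(a^+)\bigr)^*$ and $L(a^+)L(a)=\bigl(L(a^+)L(a)\bigr)^*$. Here I would first observe that $L(a)L(a^+)=L(aa^+)$, so it suffices to show that $L(p)$ is a (real) symmetric matrix whenever $p=aa^+$ or $p=a^+a$. The key structural fact is that $aa^+$ and $a^+a$ are always \emph{self-conjugate} projections in the relevant sense: in the invertible case $aa^+=a^+a=1$, whose matrix $L(1)=E_4$ is trivially symmetric; in the lightlike case the explicit formulas (\ref{cinvf}) give $aa^+=\tfrac12(1+\tfrac{c_2}{\bar{c_1}}\bj)$ and $a^+a=\tfrac12(1+\tfrac{c_2}{c_1}\bj)$, which are idempotents. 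I would therefore prove the clean general statement: for $p=p_0+p_1\bi+p_2\bj+p_3\bk\in\bh_s$, the matrix $L(p)$ is symmetric if and only if $p_1=0$, and inspection of the $L(\cdot)$ template shows symmetry reduces exactly to the off-diagonal conditions $-p_1=p_1$ (entries $(1,2),(2,1)$), $p_3=p_3$, $-p_2=p_2$\,---\,wait, reading the matrix carefully, the antisymmetric obstruction lives only in the $\bi$-coordinate, so $L(p)=L(p)^*$ iff $p_1=0$. Then I would check that for every case of the definition the element $aa^+$ (and $a^+a$) indeed has vanishing $\bi$-part: this is immediate since each is a real scalar plus a pure $\bj$-multiple. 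That single observation discharges all four Hermitian identities for both $L$ and $R$ at once.

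For the final line (\ref{qinvv}), the identities $L(a)^+=L(a^+)$ and $R(b)^+=R(b^+)$ now follow by the uniqueness of the Moore--Penrose inverse of a complex (here real) matrix: I have exhibited $L(a^+)$ satisfying all four Penrose equations with respect to $L(a)$, so it \emph{is} $L(a)^+$. The composite formula $\bigl(L(a)R(b)\bigr)^+=L(a^+)R(b^+)$ then uses the stated commutation $L(a)R(b)=R(b)L(a)$: I would verify the four Penrose axioms directly for the candidate $L(a^+)R(b^+)$, pushing factors past one another via commutativity and collapsing each triple product using the single-factor identities already proved, then invoke uniqueness. The main obstacle I anticipate is purely bookkeeping: keeping the Hermitian-transpose axioms straight when two commuting factors are present, since $\bigl(L(a^+)R(b^+)L(a)R(b)\bigr)^*$ must be shown self-adjoint, and here I would rely on $(MN)^*=N^*M^*$ together with the symmetry of $L(aa^+)$ and $R(bb^+)$ established above rather than on any fresh computation.
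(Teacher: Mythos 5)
Your proposal is correct, but it is organized quite differently from the paper, which offers no argument at all beyond the remark that $L(a)R(b)=R(b)L(a)$ and the phrase ``we can verify directly'' --- i.e.\ the paper's implicit proof is a brute-force check of each matrix identity entry by entry. Your route factors the verification into three reusable facts: (i) $L$ is an algebra homomorphism and $R$ an anti-homomorphism, $L(pq)=L(p)L(q)$, $R(pq)=R(q)R(p)$, so the equational Penrose identities follow from $aa^+a=a$ and $a^+aa^+=a^+$ in $\bh_s$ (which hold case by case from the definition of $a^+$ and from (\ref{cinvf})); (ii) $L(p)$ and $R(p)$ are symmetric exactly when the $\bi$-component $p_1$ vanishes, and in every case of the definition both $aa^+$ and $a^+a$ are a real scalar plus a pure $\bj,\bk$-part, which settles all the symmetry conditions --- note that for $R$ the anti-homomorphism gives $R(a)R(a^+)=R(a^+a)$, so it is essential, as you do, to record the vanishing $\bi$-part of \emph{both} $aa^+$ and $a^+a$; (iii) uniqueness of the real Moore--Penrose inverse then yields $L(a)^+=L(a^+)$, $R(b)^+=R(b^+)$, and, combined with the commutation $L(a)R(b)=R(b)L(a)$, the composite formula $\bigl(L(a)R(b)\bigr)^+=L(a^+)R(b^+)$. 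What your approach buys is conceptual economy and robustness: no $4\times 4$ multiplications are ever carried out, the Hermitian conditions are explained (rather than observed) by the structural fact that $aa^+$ and $a^+a$ have no $\bi$-component, and the uniqueness argument makes clear why the displayed equalities in the first two lines of the proposition are precisely what is needed to legitimize the notation in (\ref{qinvv}). The cost is that you must be slightly careful with the order reversal $R(pq)=R(q)R(p)$ (e.g.\ the product $MX$ in the composite case is $L(aa^+)R(b^+b)$, not $L(aa^+)R(bb^+)$), a point your write-up handles correctly but which a purely computational check never has to confront.
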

Proposition \ref{mpprop} implies that the concept of Moore-Penrose inverse of split quaternions is well defined.

\begin{thm}Let $0\neq a=c_1+c_2\bj\in  Z(\bh_s)$ and $0\neq b=u_1+u_2\bj\in  Z(\bh_s)$.   Then the equation $axb=d$ is  solvable
	if and only if \begin{equation}\label{caxb}
	(1+\frac{c_2}{\bar{c_1}}\bj)d(1+\frac{u_2}{u_1}\bj)=4d,
	\end{equation} in which case all solutions are given by  \begin{equation}\label{saxb}
	x=\frac{\bar{c_1}d\bar{u_1}}{4|c_1|^2|u_1|^2}+y-\frac{1}{4}(1+\frac{c_2}{c_1}\bj)y(1+\frac{u_2}{\bar{u_1}}\bj),\forall y\in\bh_s.
	\end{equation}
\end{thm}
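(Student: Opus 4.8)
The plan is to reduce the quaternionic equation to an ordinary real linear system by vectorization and then invoke Lemma \ref{general}. Since $\overrightarrow{axb}=L(a)R(b)\overrightarrow{x}$, the equation $axb=d$ is equivalent to the real system $A\overrightarrow{x}=\overrightarrow{d}$ with $A=L(a)R(b)\in\br^{4\times 4}$. By the third identity of (\ref{qinvv}) we have $A^{+}=L(a^{+})R(b^{+})$, so Lemma \ref{general} applies verbatim: the system is solvable if and only if $AA^{+}\overrightarrow{d}=\overrightarrow{d}$, and in that case the general solution is $\overrightarrow{x}=A^{+}\overrightarrow{d}+(E_4-A^{+}A)\overrightarrow{y}$ as $\overrightarrow{y}$ ranges over $\br^{4}$. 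Everything then reduces to translating these matrix expressions back into split-quaternion products.

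For the translation I would use that $p\mapsto L(p)$ is multiplicative while $p\mapsto R(p)$ is anti-multiplicative (both immediate from associativity), together with the commuting relation $L(a)R(b)=R(b)L(a)$ recorded in the text. Hence $AA^{+}=L(a)L(a^{+})R(b)R(b^{+})=L(aa^{+})R(b^{+}b)$, so $AA^{+}\overrightarrow{d}=\overrightarrow{(aa^{+})\,d\,(b^{+}b)}$ and the solvability condition is $(aa^{+})d(b^{+}b)=d$. Substituting $aa^{+}=\tfrac12(1+\tfrac{c_2}{\bar{c_1}}\bj)$ and $b^{+}b=\tfrac12(1+\tfrac{u_2}{u_1}\bj)$ from (\ref{cinvf}) and clearing the factor $\tfrac14$ yields precisely (\ref{caxb}). (Necessity can also be seen directly: if $axb=d$, then $(aa^{+})d(b^{+}b)=(aa^{+}a)x(bb^{+}b)=axb=d$ by $aa^{+}a=a$ and $bb^{+}b=b$.)

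For the solution set the homogeneous part translates as $(E_4-A^{+}A)\overrightarrow{y}=\overrightarrow{y-(a^{+}a)\,y\,(bb^{+})}$, and inserting $a^{+}a=\tfrac12(1+\tfrac{c_2}{c_1}\bj)$ and $bb^{+}=\tfrac12(1+\tfrac{u_2}{\bar{u_1}}\bj)$ from (\ref{cinvf}) reproduces the last term of (\ref{saxb}). It remains to supply a particular solution. Rather than carry $A^{+}\overrightarrow{d}=\overrightarrow{a^{+}db^{+}}$, which has an awkward denominator, I would verify directly that $x_0=\frac{\bar{c_1}d\bar{u_1}}{4|c_1|^2|u_1|^2}$ solves the equation: using $\bj z=\bar z\bj$ one gets $a\bar{c_1}=|c_1|^2(1+\tfrac{c_2}{\bar{c_1}}\bj)$ and $\bar{u_1}b=|u_1|^2(1+\tfrac{u_2}{u_1}\bj)$, whence $ax_0b=\tfrac14(1+\tfrac{c_2}{\bar{c_1}}\bj)\,d\,(1+\tfrac{u_2}{u_1}\bj)$, which equals $d$ exactly by (\ref{caxb}). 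Since any single particular solution plus the null space $\{(E_4-A^{+}A)\overrightarrow{y}\}$ exhausts the solution set, replacing $a^{+}db^{+}$ by $x_0$ is legitimate and produces (\ref{saxb}).

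The routine but error-prone heart of the argument is the bookkeeping of conjugates: keeping straight that $L$ is a homomorphism while $R$ reverses order (so one meets both $b^{+}b$ and $bb^{+}$), and that the lightlike Moore-Penrose inverse carries $+c_2\bj$ rather than $-c_2\bj$. The one genuinely non-mechanical step is recognizing the clean particular solution $x_0$ and checking that its verification closes exactly against the solvability condition (\ref{caxb}); once that is in hand, the remainder is substitution from (\ref{cinvf}).
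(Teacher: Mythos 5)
Your proposal is correct and follows essentially the same route as the paper: vectorize via $L(a)R(b)$, apply Lemma \ref{general} with Proposition \ref{mpprop}, and translate back using (\ref{cinvf}). The only (cosmetic) difference is at the particular solution: the paper computes $a^{+}db^{+}$ and simplifies it to $\frac{\bar{c_1}d\bar{u_1}}{4|c_1|^2|u_1|^2}$ via (\ref{caxb}), whereas you verify directly that this quaternion solves the equation, which amounts to the same use of the solvability condition.
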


\begin{proof}
	It is obvious that $axb=d$ is equivalent to $L(a)R(b)\overrightarrow{x}=\overrightarrow{d}$.  By Lemma \ref{general} $axb=d$ is  solvable
	if and only if  $$L(a)R(b)\big(L(a)R(b)\big)^+\overrightarrow{d}=\overrightarrow{d}.$$  Returning to quaternion form  by Proposition \ref{mpprop}, we have $aa^+db^+b=d$. Using (\ref{cinvf}), we can rewrite  $aa^+db^+b=d$  as  (\ref{caxb}). By Lemma \ref{general},  the general solution is $$\overrightarrow{x}=\big((L(a)R(b)\big)^+\overrightarrow{d}+(E_4-\Big(\big(L(a)R(b)\big)^+L(a)R(b)\Big)\overrightarrow{y},\forall y\in \bh_s.$$  Hence the general solution can be expressed as$$x=a^+db^++(y-a^+aybb^+),\forall y\in\bh_s.$$
	That is \begin{equation*}	x=\frac{(\bar{c_1}+c_2\bj)d(\bar{u_1}+u_2\bj)}{16|c_1|^2|u_1|^2}+y-\frac{1}{4}(1+\frac{c_2}{c_1}\bj)y(1+\frac{u_2}{\bar{u_1}}\bj),\forall y\in\bh_s.
	\end{equation*}
	By  (\ref{caxb}) we have $\frac{(\bar{c_1}+c_2\bj)d(\bar{u_1}+u_2\bj)}{16|c_1|^2|u_1|^2}=\frac{\bar{c_1}d\bar{u_1}}{4|c_1|^2|u_1|^2}$.
\end{proof}

In similar way, we have the following corollaries.

\begin{cor}Let $0\neq a=c_1+c_2\bj\in  Z(\bh_s)$. Then the equation $ax=0$ has solutions: $$x=(1-a^+a)y=\frac{1}{2}(1-\frac{c_2}{c_1}\bj)y,\forall y\in \bh_s.$$
\end{cor}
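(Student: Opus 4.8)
The plan is to treat this as the homogeneous, one-sided specialization of the preceding theorem on $axb=d$, and to run the same matrix argument. First I would rewrite the equation $ax=0$ in matrix form using the identity $\overrightarrow{ax}=L(a)\overrightarrow{x}$ recorded in the introduction; this turns $ax=0$ into the real linear system $L(a)\overrightarrow{x}=\overrightarrow{0}$ for the coordinate vector $\overrightarrow{x}\in\br^{4}$.

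Next I would invoke Lemma \ref{general} with $A=L(a)$ and right-hand side $b=0$. Because the right-hand side vanishes, the solvability condition $AA^{+}\cdot 0=0$ holds automatically, so the system is always consistent, and its general solution is
$$\overrightarrow{x}=\big(E_4-L(a)^{+}L(a)\big)\overrightarrow{y},\qquad \forall y\in\bh_s.$$
I would then translate this back to quaternion form via Proposition \ref{mpprop}: the identity $L(a)^{+}=L(a^{+})$ together with the multiplicativity $L(a^{+})L(a)=L(a^{+}a)$ shows that $E_4-L(a)^{+}L(a)$ corresponds to left multiplication by $1-a^{+}a$, whence $x=(1-a^{+}a)y$ for all $y\in\bh_s$.

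Finally, to obtain the explicit formula, I would substitute the value of $a^{+}a$ for a lightlike element from (\ref{cinvf}), namely $a^{+}a=\frac{1}{2}(1+\frac{c_2}{c_1}\bj)$, and simplify $1-a^{+}a=\frac{1}{2}(1-\frac{c_2}{c_1}\bj)$, which is the asserted expression. Before doing so I would note that $\frac{c_2}{c_1}$ is well defined: since $a\neq 0$ and $a\in Z(\bh_s)$ give $|c_1|^2=|c_2|^2$, a vanishing $c_1$ would force $c_2=0$ and hence $a=0$, a contradiction, so $c_1\neq 0$.

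I do not expect any genuine obstacle here, since every ingredient has already been established; the proof is purely a matter of assembling Lemma \ref{general}, Proposition \ref{mpprop}, and (\ref{cinvf}). The only point demanding care is keeping the dictionary between the matrix operators $L(\cdot)$ and quaternion multiplication consistent, so that the idempotent $E_4-L(a)^{+}L(a)$ is correctly read off as the left-multiplier $1-a^{+}a$ rather than a right-multiplier.
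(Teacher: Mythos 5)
Your proposal is correct and is exactly the argument the paper intends: the paper states this corollary with only the remark ``in similar way,'' meaning the one-sided specialization of the $axb=d$ proof via Lemma \ref{general}, Proposition \ref{mpprop}, and (\ref{cinvf}), which is precisely what you carry out. Your extra check that $c_1\neq 0$ (so that $\frac{c_2}{c_1}$ makes sense) is a small point the paper leaves implicit, but otherwise the two arguments coincide.
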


\begin{cor}Let $0\neq a=c_1+c_2\bj\in  Z(\bh_s)$. Then the equation $ax=d$ is  solvable
	if and only if $aa^+d=\frac{1}{2}(1+\frac{c_2}{\bar{c_1}}\bj)d=d$, in which case all solutions are given by $$x=a^+d+(1-a^+a)y=\frac{\bar{c_1}+c_2\bj}{4|c_1|^2}d+\frac{1}{2}(1-\frac{c_2}{c_1}\bj)y,\forall y\in \bh_s.$$
\end{cor}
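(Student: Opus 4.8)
The plan is to follow the same matrix-reduction strategy used for $axb=d$, but now with only a left multiplication, so the relevant real $4\times4$ matrix is $L(a)$ rather than $L(a)R(b)$. First I would rewrite the quaternionic equation $ax=d$ in coordinates as the real linear system $L(a)\,\overrightarrow{x}=\overrightarrow{d}$, using the identity $\overrightarrow{qx}=L(q)\overrightarrow{x}$. Since $L(a)\in\br^{4\times4}$ and $\overrightarrow{d}\in\br^{4}$, Lemma~\ref{general} applies verbatim: the system is solvable if and only if $L(a)L(a)^{+}\overrightarrow{d}=\overrightarrow{d}$, and in that case the general solution is $\overrightarrow{x}=L(a)^{+}\overrightarrow{d}+(E_4-L(a)^{+}L(a))\overrightarrow{y}$ for all $y\in\bh_s$.

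Next I would translate both the solvability condition and the solution formula back into quaternion language. By Proposition~\ref{mpprop} we have $L(a)^{+}=L(a^{+})$, and since $L$ is multiplicative we get $L(a)L(a)^{+}=L(aa^{+})$; hence the criterion $L(a)L(a)^{+}\overrightarrow{d}=\overrightarrow{d}$ is exactly $aa^{+}d=d$. Because $a=c_1+c_2\bj\in Z(\bh_s)$ I may invoke (\ref{cinvf}), which gives $aa^{+}=\tfrac12(1+\tfrac{c_2}{\bar{c_1}}\bj)$; substituting this produces the stated equivalent condition $\tfrac12(1+\tfrac{c_2}{\bar{c_1}}\bj)d=d$. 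Note that $a\neq0$ together with $|c_1|^2=|c_2|^2$ forces $c_1\neq0$, so every denominator appearing below is legitimate.

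For the solution set I would again use multiplicativity of $L$ to read $L(a)^{+}\overrightarrow{d}=\overrightarrow{a^{+}d}$ and $(E_4-L(a)^{+}L(a))\overrightarrow{y}=\overrightarrow{y-a^{+}ay}$, so that the general solution becomes $x=a^{+}d+(1-a^{+}a)y$. Finally I would insert the explicit expressions from the $Z(\bh_s)$ case: the definition gives $a^{+}=\tfrac{\bar{c_1}+c_2\bj}{4|c_1|^2}$, and (\ref{cinvf}) gives $a^{+}a=\tfrac12(1+\tfrac{c_2}{c_1}\bj)$, whence $1-a^{+}a=\tfrac12(1-\tfrac{c_2}{c_1}\bj)$. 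This yields the closed form $x=\tfrac{\bar{c_1}+c_2\bj}{4|c_1|^2}d+\tfrac12(1-\tfrac{c_2}{c_1}\bj)y$ claimed in the statement.

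The argument is essentially a specialization of the $axb=d$ proof, so there is no genuinely hard step; the only point requiring care is the faithful passage between matrix products and quaternion products, which rests entirely on the fact that $q\mapsto L(q)$ is an algebra homomorphism, so that $L(a)^{+}L(a)=L(a^{+}a)$ via $L(a)^{+}=L(a^{+})$ from Proposition~\ref{mpprop}. Once this dictionary is in place, verifying that $aa^{+}d=d$ coincides with $\tfrac12(1+\tfrac{c_2}{\bar{c_1}}\bj)d=d$, and that the two written forms of the general solution agree, is a routine substitution.
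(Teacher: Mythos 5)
Your proposal is correct and is essentially the paper's own argument: the paper derives these corollaries ``in similar way'' from its proof of the $axb=d$ theorem, i.e.\ by passing to $L(a)\overrightarrow{x}=\overrightarrow{d}$, applying Lemma~\ref{general}, and translating back through Proposition~\ref{mpprop} and (\ref{cinvf}), exactly as you do. Your explicit remarks on the multiplicativity of $L$ and on $c_1\neq 0$ are fine points the paper leaves implicit, but they do not constitute a different route.
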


\begin{cor}\label{cor3} Let $0\neq a=c_1+c_2\bj\in  Z(\bh_s)$. Then the equation $xa=d$ is  solvable
	if and only if $da^+a=\frac{1}{2}d(1+\frac{c_2}{c_1}\bj)=d$, in which case all solutions are given by $$x=da^++y(1-aa^+)=d\frac{\bar{c_1}+c_2\bj}{4|c_1|^2}+\frac{y}{2}(1-\frac{c_2}{\bar{c_1}}\bj),\forall y\in \bh_s.$$
\end{cor}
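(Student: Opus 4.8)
The plan is to mirror the proof of the preceding corollary, replacing left multiplication by right multiplication throughout. First I would observe that the equation $xa=d$ is equivalent to the real linear system $R(a)\overrightarrow{x}=\overrightarrow{d}$, using the identity $\overrightarrow{xq}=R(q)\overrightarrow{x}$ recorded in the introduction. Then Lemma \ref{general}, applied to the $4\times 4$ real matrix $R(a)$ with right-hand side $\overrightarrow{d}$, tells me that the system is solvable if and only if $R(a)R(a)^+\overrightarrow{d}=\overrightarrow{d}$, and that its general solution is $\overrightarrow{x}=R(a)^+\overrightarrow{d}+\big(E_4-R(a)^+R(a)\big)\overrightarrow{y}$ for arbitrary $y\in\bh_s$.

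Next I would invoke Proposition \ref{mpprop}, specifically the identity $R(a)^+=R(a^+)$ from (\ref{qinvv}), to translate everything back into quaternion form. The crucial bookkeeping is to track the order of multiplication: since $R(a^+)\overrightarrow{d}=\overrightarrow{da^+}$ and then $R(a)\overrightarrow{da^+}=\overrightarrow{(da^+)a}=\overrightarrow{da^+a}$, the solvability condition $R(a)R(a^+)\overrightarrow{d}=\overrightarrow{d}$ becomes simply $da^+a=d$. Likewise $R(a^+)R(a)\overrightarrow{y}=\overrightarrow{yaa^+}$, so the homogeneous term is $\overrightarrow{y}-\overrightarrow{yaa^+}=\overrightarrow{y(1-aa^+)}$, which gives the quaternion form $x=da^++y(1-aa^+)$.

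Finally I would substitute the explicit formulas for the lightlike case. From (\ref{cinvf}) we have $a^+a=\frac{1}{2}(1+\frac{c_2}{c_1}\bj)$, which turns the solvability condition $da^+a=d$ into $\frac{1}{2}d(1+\frac{c_2}{c_1}\bj)=d$; and $aa^+=\frac{1}{2}(1+\frac{c_2}{\bar{c_1}}\bj)$, so $1-aa^+=\frac{1}{2}(1-\frac{c_2}{\bar{c_1}}\bj)$. Combining these with the definition $a^+=\frac{\bar{c_1}+c_2\bj}{4|c_1|^2}$ valid for $0\neq a\in Z(\bh_s)$ yields the stated closed forms $x=d\frac{\bar{c_1}+c_2\bj}{4|c_1|^2}+\frac{y}{2}(1-\frac{c_2}{\bar{c_1}}\bj)$.

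The main obstacle — indeed the only point requiring genuine care — is the asymmetry between $aa^+$ and $a^+a$: because $a$ is lightlike, these two idempotents are genuinely distinct (one carries $\bar{c_1}$, the other $c_1$ in its denominator), so I must ensure that the projector $a^+a$ appears on the \emph{right} of $d$ in the solvability criterion while $aa^+$ governs the free-parameter term, which is precisely the reverse of the situation for the left equation $ax=d$ in the previous corollary.
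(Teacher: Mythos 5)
Your proposal is correct and follows exactly the route the paper intends: the paper derives this corollary ``in similar way'' to its theorem on $axb=d$, i.e.\ by rewriting $xa=d$ as $R(a)\overrightarrow{x}=\overrightarrow{d}$, applying Lemma \ref{general}, translating back via $R(a)^+=R(a^+)$ from Proposition \ref{mpprop}, and substituting the formulas (\ref{cinvf}). Your bookkeeping of the anti-homomorphism property of $R$ (so that $a^+a$ lands on the right of $d$ while $1-aa^+$ multiplies $y$ on the right) is precisely the point that makes the corollary's asymmetric statement come out correctly.
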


\section{Similarity and consimilarity}\label{simsec}

\begin{defi}
	We say that two split quaternions $a,b\in \bh_s$  are similar (resp. consimilar)
	if and only if there exists a $q\in \bh_s-Z(\bh_s)$ such that $qa=bq$ (resp. $aq=\bar{q}b$).
\end{defi}
It is obvious that $xa=bx$ is equivalent to \begin{equation}(R(a)-L(b))\overrightarrow{x}=\overrightarrow{0}.\end{equation} Noting  the linear equations studied  in Section \ref{mpinvsec}, we  assume that $a,b\in \bh_s-\br$   for  $xa=bx$ in this section.

We can verify the following proposition directly.
 \begin{pro}\label{proped}
Let  $T(a,b)=R(a)-L(b)$. Then the eigenvalues of   $T(a,b)$ are
\begin{equation*}\lambda_{1,2,3,4}=a_0\pm \sqrt{K(a)}-\Big(b_0\pm \sqrt{K(b)}\Big)\end{equation*}
and
\begin{equation*}\det(T(a,b))=\Pi_i^4\lambda_i=(a_0-b_0)^4-2(a_0-b_0)^2\Big(K(a)+K(b)\Big)+\Big(K(a)-K(b)\Big)^2.\end{equation*}
$\det(T(a,b))=0$ if only if one of the following conditions holds:
\begin{itemize}
	\item [(1)]  $a_0=b_0$ and $K(a)=K(b)$; in this case $\mathrm{rank}(T(a,b))=2$.
\item [(2)]  $a_0-b_0\neq 0$ and $\det(T(a,b))=0$; in this case $K(a),K(b)\ge 0$ and  $\mathrm{rank}(T(a,b))=3$.
\end{itemize}
 \end{pro}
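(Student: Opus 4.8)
The plan is to reduce everything to the spectra of $R(a)$ and $L(b)$, using that these two matrices commute and that each satisfies a quadratic. First I would record the annihilating quadratics: since $R$ is an anti\-homomorphism and $L$ a homomorphism, Proposition~\ref{zdivision} gives $R(a)^2-2a_0R(a)+I_aE_4=0$ and $L(b)^2-2b_0L(b)+I_bE_4=0$. Using $I_q=q_0^2-K(q)$, the roots of these quadratics are $\alpha_\pm=a_0\pm\sqrt{K(a)}$ and $\beta_\pm=b_0\pm\sqrt{K(b)}$, so these are the eigenvalues of $R(a)$ and $L(b)$, each of multiplicity two when the corresponding $K$ is nonzero. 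Because $R(a)$ and $L(b)$ commute they are simultaneously triangulizable over $\bc$, so the eigenvalues of $T(a,b)=R(a)-L(b)$ are differences $\alpha_s-\beta_t$; the task is to determine the correct pairing and multiplicities.

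To fix the pairing I would compute the one mixed invariant $\mathrm{tr}\big(L(b)R(a)\big)$, which equals $4a_0b_0$ (a direct check on the basis $1,\bi,\bj,\bk$). Writing the projector of $\bc^4$ onto the eigenspace $E_+^a$ of $R(a)$ as $\pi_+=\big(R(a)-\alpha_-E_4\big)/(\alpha_+-\alpha_-)$, and noting that $E_\pm^a$ are $L(b)$\-invariant, I get $\mathrm{tr}\big(L(b)|_{E_+^a}\big)=\big(4a_0b_0-\alpha_-\cdot4b_0\big)/(\alpha_+-\alpha_-)=2b_0=\beta_++\beta_-$. Hence on each $R(a)$\-eigenspace the operator $L(b)$ carries both of its eigenvalues $\beta_+,\beta_-$ exactly once, the simultaneous eigenbasis realises all four pairs once, and (for $K(a),K(b)\neq0$) the eigenvalues of $T(a,b)$ are exactly the four numbers $\alpha_s-\beta_t$, each simple. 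Then $\det T(a,b)=\prod_{s,t}(\alpha_s-\beta_t)$; expanding this product with $\alpha_+\alpha_-=I_a$, $\beta_+\beta_-=I_b$ yields $\big[(a_0-b_0)^2+K(a)-K(b)\big]^2-4K(a)(a_0-b_0)^2$, which is the stated quartic. Since both sides are polynomials in the $a_i,b_j$ agreeing on the dense set $\{K(a)K(b)\neq0\}$, the determinant formula holds for all $a,b$.

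For the dichotomy I would read off from $\det T(a,b)=\prod_{s,t}(\alpha_s-\beta_t)$ that $\det T(a,b)=0$ iff some $\alpha_s=\beta_t$. When $a_0=b_0$ the quartic collapses to $(K(a)-K(b))^2$, giving condition (1); the two symmetric forms of the quartic show that when $a_0\neq b_0$ vanishing forces $K(a),K(b)\ge0$, which is condition (2). Since $\mathrm{rank}\,T(a,b)=4-\dim\{x:xa=bx\}$, I then compute $\dim\ker T$. In case (1), after the harmless real translation making $a_0=b_0=0$ one has $a^2=b^2=K(a)=:k$, and a short computation gives $T^3=4kT$; if $k\neq0$ then $T$ is diagonalizable with eigenvalues $0,0,\pm2\sqrt k$, so $\mathrm{rank}\,T=2$. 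In case (2), after translating to $b_0=0$, if $K(a),K(b)>0$ then $T$ is diagonalizable and exactly one combination $\alpha_s-\beta_t$ vanishes (two vanishing would force $a_0=b_0$ or some $K=0$), so $\mathrm{rank}\,T=3$.

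The hard part is the degenerate subcases where some $K$ equals $0$, i.e.\ a lightlike imaginary part, for then $R(a)$ or $L(b)$ is non-diagonalizable and the clean eigenvalue count fails. Here I would use that $\bh_s\cong\bm_2(\br)$ is simple: a nonzero lightlike pure-imaginary $q$ corresponds to a rank-one nilpotent, so $R(q)$ and $L(q)$ have rank exactly $2$, and $L(b)R(a)\neq0$ whenever $a,b\neq0$. In case (1) with $k=0$ this gives $T^2=-2L(b)R(a)\neq0$ together with $T^3=0$, forcing Jordan type $(3,1)$ and hence $\mathrm{rank}\,T=2$. In case (2) with, say, $K(b)=0<K(a)$, vanishing of $\det T$ forces $K(a)=a_0^2$, so one eigenspace $E$ of $R(a)$ has $R(a)|_E=0$ while on the complementary eigenspace $T$ is invertible; since the nilpotent $L(b)$ has rank $2$ and splits across the two eigenspaces of $R(a)$ as $1+1$, the restriction $L(b)|_E$ has rank $1$, so $\ker T$ is one\-dimensional and $\mathrm{rank}\,T=3$. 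Assembling the generic and degenerate subcases completes the proof; the main obstacle is precisely controlling these non-diagonalizable boundary cases, where I expect to lean on the $\bm_2(\br)$\-structure and the rank-$2$ facts for lightlike elements.
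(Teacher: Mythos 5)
Your proof is correct, and it takes a genuinely different route from the paper, which in fact offers no argument at all: Proposition~\ref{proped} is introduced there only with the sentence ``We can verify the following proposition directly,'' i.e.\ the intended proof is brute-force symbolic computation with the explicit $4\times4$ matrix $T(a,b)$ (its characteristic polynomial, and case-by-case rank checks). Your argument is structural instead: the annihilating quadratics $R(a)^2-2a_0R(a)+I_aE_4=0$ and $L(b)^2-2b_0L(b)+I_bE_4=0$ follow from Proposition~\ref{zdivision} because $L$ is a homomorphism and $R$ an anti-homomorphism; commutativity of $L(b)$ and $R(a)$ together with the single mixed trace $\mathrm{tr}\big(L(b)R(a)\big)=4a_0b_0$ forces each eigenspace of $R(a)$ to carry both eigenvalues of $L(b)$, so the spectrum of $T(a,b)$ consists of all four differences, each once; the determinant identity then extends to the degenerate locus by polynomial density; and the rank counts in the $K=0$ boundary cases come from $T^3=4kT$, Jordan type, and the rank facts for lightlike elements under the isomorphism of $\bh_s$ with the algebra of $2\times2$ real matrices (a nonzero lightlike element has rank one, so $L(q)$, $R(q)$ have rank $2$ and $L(b)R(a)$ has rank $1$). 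What your route buys: it explains \emph{why} all four sign combinations occur (simultaneous triangularization alone only gives \emph{some} pairing), it is rigorous exactly in the non-diagonalizable cases where a naive eigenvalue count fails, and the ingredients generalize beyond this specific $4\times4$ situation; what the paper's route buys is only brevity. Two small points to tighten in a write-up: the eigenvalue assertion itself (not just the determinant) in the degenerate cases $K(a)K(b)=0$ should be justified, either by applying your density argument to the whole characteristic polynomial or by repeating the trace argument there; and in case (2) you treat $K(b)=0<K(a)$ with a ``say,'' so you should note explicitly that the mirror case $K(a)=0<K(b)$ follows by the symmetric argument, or by conjugation, since $xa=bx$ is equivalent to $\bar{x}\bar{b}=\bar{a}\bar{x}$, which swaps the two slots while preserving $K$.
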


\begin{exam} Two examples of the $\det(T(a,b))=0$: (1)\quad
	$a=1+3\bi+2\bj+\bk,b=1+3\bi+\bj+2\bk$, $K(a)=K(b)=-4$, $\mathrm{rank}(T(a,b))=2$;  (2)\quad
	$a=2+\bi+\bk,b=1+\bk$, $K(a)=0, K(b)=1$, $\mathrm{rank}(T(a,b))=3$.
\end{exam}

\begin{lem}\label{sginv}
Let $a,b\in \bh_s-\br$ and $T=R(a)-L(b)$. If $a_0=b_0,K(a)=K(b)$ then $$T^+=\frac{R(a')-L(b')}{2(|\Im(a)|^2+|\Im(b)|^2)}.$$
\end{lem}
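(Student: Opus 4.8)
The plan is to first recognize that the numerator $R(a')-L(b')$ is nothing but the transpose of $T$, and then to reduce the four Moore--Penrose equations to a single matrix identity. Reading the entries of $L(\cdot)$ and $R(\cdot)$ off their defining matrices, one checks directly that priming a split quaternion (which only negates its $\bi$--component) produces exactly the same matrix as transposing, i.e.
\begin{equation*}
R(q)^{T}=R(q'),\qquad L(q)^{T}=L(q')\qquad(q\in\bh_s).
\end{equation*}
Hence $R(a')-L(b')=R(a)^{T}-L(b)^{T}=T^{T}$, and since all entries of $T$ are real, the matrix conjugate $^{*}$ appearing in the Penrose equations coincides with $^{T}$. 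Writing $c=2\big(|\Im(a)|^{2}+|\Im(b)|^{2}\big)$, which is strictly positive because $a,b\in\bh_s-\br$ force $\Im(a),\Im(b)\neq 0$, the claim becomes simply $T^{+}=\tfrac1c T^{T}$.

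Next I would observe that for a real matrix the candidate $X=\tfrac1c T^{T}$ automatically satisfies the two symmetry conditions, since $TX=\tfrac1c TT^{T}$ and $XT=\tfrac1c T^{T}T$ are symmetric by construction; moreover $XTX=X$ is precisely the transpose of $TXT=T$. Thus all four Penrose equations collapse to the single identity
\begin{equation*}
TT^{T}T=cT,
\end{equation*}
which is the heart of the matter.

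To prove this identity I would first use $a_{0}=b_{0}$ to cancel the scalar parts, writing $T=R(\Im(a))-L(\Im(b))$; this leaves $T^{T}$ and $c$ unchanged, so there is no loss in assuming $a,b$ purely imaginary. By Proposition \ref{proped}(1) the hypotheses $a_{0}=b_{0}$ and $K(a)=K(b)$ give $\mathrm{rank}(T)=2$, so $TT^{T}$ is a symmetric positive semidefinite matrix of rank $2$ whose range equals that of $T$; it therefore suffices to show that its two nonzero eigenvalues both equal $c$. A short trace computation, using that each coordinate appears four times in $R(\cdot)$ and $L(\cdot)$ and that $\langle R(\Im(a)),L(\Im(b))\rangle=0$ once the scalar parts vanish, gives $\mathrm{trace}(TT^{T})=\|T\|_{F}^{2}=4\big(|\Im(a)|^{2}+|\Im(b)|^{2}\big)=2c$. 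Computing $\mathrm{trace}\big((TT^{T})^{2}\big)=2c^{2}$ then forces the nonzero eigenvalues $\mu_{1},\mu_{2}$ to satisfy $\mu_{1}+\mu_{2}=2c$ and $\mu_{1}^{2}+\mu_{2}^{2}=2c^{2}$, whence $\mu_{1}=\mu_{2}=c$. Consequently $TT^{T}=c\,\Pi$ with $\Pi$ the orthogonal projection onto the range of $T$, and since $\Pi T=T$ we conclude $TT^{T}T=cT$.

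I expect the main obstacle to be the evaluation of $\mathrm{trace}\big((TT^{T})^{2}\big)=\|TT^{T}\|_{F}^{2}$: this is the one place where the constraint $K(a)=K(b)$ must combine with $a_{0}=b_{0}$ to pin the two singular values of $T$ to a common value, and without it one generically has $\det(T)\neq 0$ and $T^{+}=T^{-1}\neq\tfrac1c T^{T}$. By contrast, the identifications $R(q)^{T}=R(q')$, $L(q)^{T}=L(q')$ and the reduction of the Penrose equations are purely formal. Should the trace-of-square computation prove unwieldy, an equivalent route is to verify $TT^{T}T=cT$ entrywise after reducing to purely imaginary $a,b$, or to check $TT^{T}y=cy$ directly for the typical range vector $y=xa-bx$.
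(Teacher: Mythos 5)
Your proposal is correct in outline and takes a genuinely different route from the paper's. The paper never observes that $R(a')-L(b')$ is the transpose of $T$; it sets $S=R(a')-L(b')$ and verifies all four Penrose equations head-on, using quaternion identities such as $R\big(\Im(a)\Im(a')\big)+R\big(\Im(a')\Im(a)\big)=2|\Im(a)|^2E_4$ and $R\big(\Im(a)\Im(a')\Im(a)\big)+L\big(\Im(b)^2\big)R\big(\Im(a')\big)=2|\Im(a)|^2R\big(\Im(a)\big)$ (this last identity is where $K(a)=K(b)$ enters for the paper, since $\Im(b)^2=K(b)$ while $\Im(a)\Im(a')\Im(a)=2|\Im(a)|^2\Im(a)-K(a)\Im(a')$), thereby obtaining $TST=cT$, $STS=cS$, and the symmetry of $ST$ and $TS$, where $c=2\big(|\Im(a)|^2+|\Im(b)|^2\big)$. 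Your identities $R(q)^T=R(q')$ and $L(q)^T=L(q')$ --- hence $S=T^T$ --- are correct and are a real structural simplification: they make the two symmetry conditions automatic, make $XTX=X$ a formal consequence of $TXT=T$, collapse the lemma to the single identity $TT^TT=cT$, and explain conceptually why the prime operation appears in the formula at all (the lemma is just the statement that $T/\sqrt{c}$ is a partial isometry). The paper's proof buys self-containedness at the price of opacity; yours buys transparency at the price of invoking the rank statement of Proposition \ref{proped} and a second trace-moment computation.

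That said, there is one genuine gap as written: the crux, $\mathrm{trace}\big((TT^T)^2\big)=2c^2$, is asserted rather than proved, and you yourself flag it as the main obstacle. It is true, and it can be closed with the same $L$/$R$ calculus you use for the first moment. With $p=\Im(a)$, $q=\Im(b)$ (after reducing to $a_0=b_0=0$), the rules $R(x)R(y)=R(yx)$, $L(x)L(y)=L(xy)$, $L(x)R(y)=R(y)L(x)$ give
\begin{equation*}
TT^T=R(p'p)+L(qq')-L(q')R(p)-L(q)R(p');
\end{equation*}
squaring, taking traces, and using $p^2=K(a)$, $q^2=K(b)$, $pp'+p'p=2|p|^2$, and $\mathrm{trace}\big(L(x)R(y)\big)=0$ for imaginary $x,y$, a routine if tedious expansion yields
\begin{equation*}
\mathrm{trace}\big((TT^T)^2\big)=2c^2-4\big(K(a)-K(b)\big)^2,
\end{equation*}
which equals $2c^2$ exactly when $K(a)=K(b)$. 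This confirms your diagnosis of where the hypothesis bites, and with it your argument is complete: $\mu_1=\mu_2=c$, so $TT^T=c\,\Pi$ with $\Pi$ the orthogonal projection onto the range of $T$, whence $TT^TT=cT$.
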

\begin{proof}
It follows from $a,b\in \bh_s-\br$ that $|\Im(a)|^2+|\Im(b)|^2\neq 0$.
Let $S=R(a')-L(b')$.  Since $a_0=b_0$, we have $S=R\big(\Im(a')\big)-L\big(\Im(b')\big)$  and  $T=R\big(\Im(a)\big)-L\big(\Im(b)\big)$.
Note that $$R\big(\Im(a)\Im(a')\big)+R\big(\Im(a')\Im(a)\big)=2|\Im(a)|^2E_4,L\big(\Im(b)\Im(b')\big)+L\big(\Im(b')\Im(b)\big)=2|\Im(b)|^2E_4,$$
$$R\big(\Im(a)\Im(a')\Im(a)\big)+L(\Im(b)^2)R(\Im(a'))=2|\Im(a)|^2R(\Im(a)),$$
$$-R\big(\Im(a)^2\big)L\big(\Im(b')\big)-L\big(\Im(b)\Im(b')\Im(b)\big)=-2|\Im(b)|^2L\big(\Im(a)\big).$$
Hence \begin{eqnarray*}TST&=&R\big(\Im(a)\Im(a')\Im(a)\big)-\big[R\big(\Im(a)\Im(a')\big)+R(\Im(a')\Im(a))\big]L\big(\Im(b)\big)+\big[L(\Im(b)\Im(b'))+L\big(\Im(b')\Im(b)\big)\big]R\big(\Im(a)\big)\\
&&-R\big(\Im(a)^2\big)L(\Im(b'))+L(\Im(b)^2)R(\Im(a'))-L\big(\Im(b)\Im(b')\Im(b)\big)=2(|\Im(a)|^2+|\Im(b)|^2)T.
\end{eqnarray*}
Similarly  we can verify that $STS=2(|\Im(a)|^2+|\Im(b)|^2)S$, $ST$ and $TS$ are symmetric matrices. 
\end{proof}

\begin{thm}\label{thmk0}
Let $a,b\in \bh_s-\br$ and $a_0=b_0,K(a)=K(b)$. Then the general solution of linear equation  $xa=bx$  is
\begin{equation}\label{thmk0e}x=y-\frac{yaa'-bya'-b'ya+b'by}{2(|\Im(a)|^2+|\Im(b)|^2)},\forall y\in \bh_s.\end{equation}
\end{thm}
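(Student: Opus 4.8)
The plan is to reduce the quaternionic equation to a homogeneous real linear system and then translate the Moore--Penrose machinery back into quaternion form. As noted just before Proposition~\ref{proped}, the equation $xa=bx$ is equivalent to $T\overrightarrow{x}=\overrightarrow{0}$ with $T=R(a)-L(b)$. Since the right-hand side is zero, the solvability condition of Lemma~\ref{general} is automatic, and that lemma returns the entire solution set as $\overrightarrow{x}=(E_4-T^+T)\overrightarrow{y}$, $\forall y\in\bh_s$. So everything comes down to computing the operator $E_4-T^+T$ explicitly and reading it off as a map on quaternions.

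First I would invoke Lemma~\ref{sginv}, which under the standing hypotheses $a_0=b_0$, $K(a)=K(b)$ supplies $T^+=S/\big(2(|\Im(a)|^2+|\Im(b)|^2)\big)$ with $S=R(a')-L(b')$; the denominator is nonzero because $a,b\notin\br$. Consequently $T^+T=ST/\big(2(|\Im(a)|^2+|\Im(b)|^2)\big)$, so the whole problem collapses to the single product $ST$.

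Next I would expand $ST=R(a')R(a)-R(a')L(b)-L(b')R(a)+L(b')L(b)$ and simplify each term using the regular-representation identities: $L$ is multiplicative, $L(pq)=L(p)L(q)$; $R$ is anti-multiplicative, $R(pq)=R(q)R(p)$ (both from associativity); and left and right multiplications commute, $L(p)R(q)=R(q)L(p)$, as recorded in the paper. These give $R(a')R(a)=R(aa')$, $L(b')L(b)=L(b'b)$, $R(a')L(b)=L(b)R(a')$, and $L(b')R(a)=R(a)L(b')$. Applying the resulting operator to $\overrightarrow{y}$ and naming the quaternion it represents yields
\[
ST\,\overrightarrow{y}=\overrightarrow{\,yaa'-bya'-b'ya+b'by\,}.
\]
Substituting this into $\overrightarrow{x}=(E_4-T^+T)\overrightarrow{y}$ produces precisely formula~(\ref{thmk0e}).

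The derivation is essentially mechanical once Lemma~\ref{sginv} is available; the only place demanding care is the non-commutative bookkeeping in expanding $ST$---in particular remembering that $R$ reverses the order of factors while $L$ preserves it, so that the four cross terms settle into the correct left/right positions around $y$. A useful sanity check is that Proposition~\ref{proped}(1) forces $\mathrm{rank}(T)=2$, hence $\dim\ker T=2$; since $E_4-T^+T$ is the orthogonal projection onto $\ker T$, the image of the map $y\mapsto y-(\dots)/\big(2(\dots)\big)$ should indeed be two-dimensional, confirming that the solution space has the expected size.
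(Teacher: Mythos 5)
Your proposal is correct and takes essentially the same route as the paper: both apply Lemma~\ref{general} with zero right-hand side to get $\overrightarrow{x}=(E_4-T^{+}T)\overrightarrow{y}$, substitute the expression for $T^{+}$ from Lemma~\ref{sginv}, and translate back to quaternion form. The only difference is that the paper compresses the final step into the phrase ``returning to quaternion form by Proposition~\ref{mpprop},'' whereas you carry out the expansion of $ST$ explicitly via $L(pq)=L(p)L(q)$, $R(pq)=R(q)R(p)$, and $L(p)R(q)=R(q)L(p)$ --- a worthwhile elaboration, and your bookkeeping (including the rank-$2$ sanity check) is accurate.
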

\begin{proof}
It follows from   Lemma \ref{sginv} that the general solution of  linear equation  $xa=bx$  is $\overrightarrow{x}=(E_4-T^+T)\overrightarrow{y}$. Returning to quaternion form  by Proposition \ref{mpprop}, we get the formula (\ref{thmk0e}).
\end{proof}

\begin{thm}\label{a0nb0}
Let $a=c_1+c_2\bj,b=u_1+u_2\bj\in \bh_s-\br$, $a_0-b_0\neq 0$ and $\det(T(a,b))=0$. Then the general solution of linear equation  $xa=bx$  is
\begin{equation}\label{ran3s}
x=y\Big(1-\frac{2(a_0-b_0)c_2}{I_b-I_a+2(a_0-b_0)\bar{c_1}}\bj\Big)a-\bar{b}y\Big(1-\frac{2(a_0-b_0)c_2}{I_b-I_a+2(a_0-b_0)\bar{c_1}}\bj\Big),\forall  y\in \bh_s.
\end{equation}
\end{thm}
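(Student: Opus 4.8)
The plan is to verify \eqref{ran3s} by a direct substitution and then a one–line dimension count. Write $w=1-\gamma\bj$ with $\gamma=\frac{2(a_0-b_0)c_2}{I_b-I_a+2(a_0-b_0)\bar c_1}$ and set $P=a^2-2b_0a+I_b$, so that the right–hand side of \eqref{ran3s} is exactly $x=ywa-\bar byw$. Using $b\bar b=I_b$ and $\bar b+b=2b_0\in\br$ (which commutes and may be pushed across $y$ and $w$), a short computation gives
\[
xa-bx=ywa^2-(\bar b+b)\,ywa+I_byw=yw\big(a^2-2b_0a+I_b\big)=ywP .
\]
By Proposition \ref{zdivision}, $a^2=2a_0a-I_a$, hence $P=2(a_0-b_0)a+(I_b-I_a)$. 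Writing $a=c_1+c_2\bj$ we get $P=p_1+p_2\bj$ with $p_1=2(a_0-b_0)c_1+(I_b-I_a)$ and $p_2=2(a_0-b_0)c_2$, so that $\gamma=p_2/\bar p_1$. Everything therefore reduces to proving $wP=0$.

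The crux is the identity $I_P=\det\big(T(a,b)\big)$. Indeed $P=2(a_0-b_0)\big(a-\mu\big)$ with $\mu=\frac{I_a-I_b}{2(a_0-b_0)}\in\br$, so $I_P=4(a_0-b_0)^2\big((a_0-\mu)^2-K(a)\big)$; substituting $I_a=a_0^2-K(a)$ and $I_b=b_0^2-K(b)$ and expanding reproduces term by term the determinant formula of Proposition \ref{proped}. Since $\det(T(a,b))=0$ by hypothesis, $P$ is lightlike, i.e.\ $|p_1|^2=|p_2|^2$. Moreover $P\neq0$ (otherwise $a=\mu\in\br$), and then $|p_1|=|p_2|$ forces $p_1\neq0$, so $\gamma$ is well defined.

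Now I would expand $wP$ using $\bj z=\bar z\bj$ for $z\in\bc$ and $\bj^2=1$, obtaining $wP=(p_1-\gamma\bar p_2)+(p_2-\gamma\bar p_1)\bj$. The coefficient of $\bj$ vanishes by the very definition $\gamma=p_2/\bar p_1$, while the complex part equals $p_1-\frac{p_2\bar p_2}{\bar p_1}=\frac{|p_1|^2-|p_2|^2}{\bar p_1}=\frac{I_P}{\bar p_1}=0$. Hence $wP=0$, and by the first paragraph $xa-bx=ywP=0$ for every $y$; so each quaternion of the form \eqref{ran3s} solves $xa=bx$.

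It remains to see that \eqref{ran3s} is the \emph{general} solution. By Proposition \ref{proped}(2) we have $\mathrm{rank}\,T(a,b)=3$, so the solution space $\{x:xa=bx\}$ is one–dimensional; thus it suffices to check that the $\br$–linear map $y\mapsto ywa-\bar byw$ is not identically zero. Since $I_w=1-|\gamma|^2=1-|p_2|^2/|p_1|^2=0$ and $w\neq0$, the right ideal $\bh_sw=\{yw\}$ is two–dimensional, whereas $\{z:za=\bar bz\}$ is one–dimensional (the eigenvalue list of $R(a)-L(\bar b)$ coincides with that of $T(a,b)$, because $\bar b$ has real part $b_0$ and $K(\bar b)=K(b)$). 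A two–dimensional space cannot sit inside a line, so some $z=yw$ has $za\neq\bar bz$, giving a nonzero value of the map; its image is therefore the whole solution line. I expect the main obstacle to be the bookkeeping behind $I_P=\det(T(a,b))$, since it is exactly this equality that converts the determinant hypothesis into the lightlike condition $|p_1|^2=|p_2|^2$ needed to annihilate $P$; once that is in hand, the annihilation and the surjectivity are routine.
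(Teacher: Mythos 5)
Your proof is correct, and it rests on the same skeleton as the paper's own argument: both hinge on the quaternion $P=2(a_0-b_0)a+I_b-I_a$ (the paper's $p$), on the fact $I_P=\det(T(a,b))=0$ turning $P$ into a nonzero lightlike element, and on realizing the candidate solutions as $x_1a-\bar bx_1$ with $x_1=yw$ annihilating $P$, combined with $\mathrm{rank}\,T(a,b)=3$. The execution, though, genuinely differs in two respects. First, where the paper packages the key step into the matrix identity $\big(R(a)-L(b)\big)\big(R(a)-L(\bar b)\big)=R(p)$ and then invokes Corollary \ref{cor3} (i.e.\ the Moore--Penrose machinery of Lemma \ref{general}) to describe $\{x_1:x_1p=0\}$, you derive the same identity $xa-bx=ywP$ by direct quaternion manipulation and kill $P$ by the bare computation $wP=(p_1-\gamma\bar p_2)+(p_2-\gamma\bar p_1)\bj=0$; this makes your verification self-contained and elementary, at the mild cost of checking $I_P=\det(T(a,b))$ by expansion, a fact the paper asserts without computation. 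Second --- and this is an actual improvement --- you justify why \eqref{ran3s} exhausts the solution space: the paper merely says ``it follows from $\mathrm{rank}(T(a,b))=3$,'' leaving implicit why the linear map $y\mapsto ywa-\bar byw$ is not identically zero, whereas your dimension count ($\bh_sw$ has dimension at least $2$, while $\{z:za=\bar bz\}=\ker\big(R(a)-L(\bar b)\big)$ is one-dimensional by Proposition \ref{proped}(2) applied to the pair $(a,\bar b)$, since $\Re(\bar b)=b_0$ and $K(\bar b)=K(b)$) closes that gap cleanly. The one spot where you assert slightly more than you justify is the claim that $\bh_sw$ is exactly two-dimensional: for your argument you only need $\dim\bh_sw\ge 2$, which is immediate because $w$ and $\bi w$ are $\br$-linearly independent (the exact value $2$ would again require Moore--Penrose rank considerations for $R(w)$ with $w$ lightlike); with that phrasing adjusted, the proof is complete.
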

\begin{proof}
It follows from   Proposition \ref{proped} that $\det\big(R(a)-L(b)\big)=\det\big(R(a)-L(\bar{b})\big).$
Let $$p= I_b-I_a+2(a_0-b_0)a=I_b-I_a+2(a_0-b_0)c_1+2(a_0-b_0)c_2\bj.$$  Then $I_p=\det(T(a,b))=0,$  which implies that $0\neq p\in Z(\bh_s)$.  Note that \begin{equation}\big(R(a)-L(b)\big)\big(R(a)-L(\bar{b})\big)=R(p).\end{equation} 
Let $x_1$  be the general solution of the equation  $xp=0$. Then $\big(R(a)-L(b)\big)\big(R(a)-L(\bar{b})\big)\overrightarrow{x_1}=\overrightarrow{0}$. By Corollary \ref{cor3}, $$x_1=y\Big(1-\frac{2(a_0-b_0)c_2}{I_b-I_a+2(a_0-b_0)\bar{c_1}}\bj\Big).$$  It follows from $\mathrm{rank}(T(a,b))=3$ that  the general solution of $xa=bx$ is $x=x_1a-\bar{b}x_1$, which is just  (\ref{ran3s}).
\end{proof}

\begin{rem}
	Kula and Yayli \cite[Theorem 5.5]{yayli} didn't treat the case $a_0=b_0,K(a)=K(b)=0$; for example $a=1+5\bi+3\bj+4\bk,b=1+13\bi+12\bj+5\bk$. Also they  overlooked  the case of Theorem \ref{a0nb0}, therefore \cite[Proposition 5.3 (ii)]{yayli} should be amended.  For example, $a=1+5\bi+5\bj+2\bk,b=2+\bi+\bj+3\bk$; in this case, the eigenvalues of $R(a)-L(b)$ are $-6,-2,0,4$ and $rank(R(a)-L(b))=3$. The solutions of  $xa=bx$ can be represented by $$x=y(1-\bk)a-\bar{b}y(1-\bk)=r(-3+\bi+\bj+3\bk),\forall r\in \br.$$
We mention that  taking $y=1$ leads to  $r=2$ in the above formula.
\end{rem}


\begin{lem}\label{similar}
Let  $a=a_0+a_1\bi+a_2\bj+a_3\bk\in \bh_s-\br$. Then there exists a  $q\in \bh_s-Z(\bh_s)$ such that $$qaq^{-1}=\left\{                                                                                                                             \begin{array}{lll}                                                                                                                                    a_0+\sqrt{K(a)}\ \bj, & \hbox{provided $K(a)>0$;} \\                                                                                                                         a_0+\sqrt{-K(a)}\ \bi, & \hbox{provided $K(a)<0$;}\\                                                                                                                                  a_0+\bi+\bj,& \hbox{provided $K(a)=0$.}
\end{array}                                                                                                                             \right.
$$
\end{lem}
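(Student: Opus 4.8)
The plan is to show that $a$ is similar to the displayed canonical element $b$ in each case, i.e. to produce a $q\in\bh_s-Z(\bh_s)$ with $qa=bq$ (equivalently $qaq^{-1}=b$). First I would record the two conjugation invariants. If $q$ is invertible and $b=qaq^{-1}$, then $I_b=I_qI_aI_{q^{-1}}=I_a$ (as $I$ is multiplicative and $I_{q^{-1}}=I_q^{-1}$), while $\Re(b)=\frac12(b+\bar b)=\Re(a)$; since $K(x)=\Re(x)^2-I_x$ follows from the definitions, this gives $K(b)=K(a)$. Thus $b_0=a_0$ and $K(b)=K(a)$ are necessary, and a one-line check shows each candidate has them: $K(a_0+\sqrt{K(a)}\,\bj)=(\sqrt{K(a)}\,\bj)^2=K(a)$ when $K(a)>0$, $K(a_0+\sqrt{-K(a)}\,\bi)=-(-K(a))=K(a)$ when $K(a)<0$, and $K(a_0+\bi+\bj)=(\bi+\bj)^2=0=K(a)$ when $K(a)=0$. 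It therefore remains only to exhibit an invertible $q$ intertwining $a$ and $b$.

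The key construction is uniform. Write $v=\Im(a)$ and $w=\Im(b)$, so that $v^2=K(a)=K(b)=w^2$ are equal scalars and $a=a_0+v$, $b=a_0+w$. I claim $q=\Im(a)+\Im(b)=v+w$ already satisfies $qa=bq$: indeed $qa=a_0(v+w)+(v+w)v=a_0(v+w)+v^2+wv$ and $bq=a_0(v+w)+w(v+w)=a_0(v+w)+wv+w^2$, and these agree since $v^2=w^2$. As $q$ is a pure quaternion, $I_q=-q^2=-\bigl(2K(a)+(vw+wv)\bigr)$, so $q$ is a non-zero-divisor exactly when $(v+w)^2\neq0$. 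Whenever this holds we are done at once with $q=\Im(a)+\Im(b)$.

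The main obstacle is the degenerate situation $(v+w)^2=0$, where the natural conjugator falls into $Z(\bh_s)$ (for instance $a=a_0-\sqrt{K(a)}\,\bj$ in the case $K(a)>0$, where $v+w=0$). Here I would route through the opposite canonical element. For $K(a)\neq0$ one has symmetrically $(v-w)v=-w(v-w)$, so $v-w$ conjugates $v$ to $-w$; moreover $(v+w)^2=0$ forces $vw+wv=-2K(a)$ and hence $(v-w)^2=4K(a)\neq0$, so $v-w$ is invertible. Composing with the fixed unit flipping $-w$ to $w$ finishes the case: when $K(a)>0$ take $\bi$, since $\bi(-\bj)\bi^{-1}=\bj$, and when $K(a)<0$ take $\bj$, since $\bj(-\bi)\bj^{-1}=\bi$; for example $q=\bi(v-w)$ gives $qaq^{-1}=\bi(a_0-w)\bi^{-1}=a_0+w=b$, with $I_q=4K(a)\neq0$. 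When $K(a)=0$ the degeneracy $(v+w)^2=2B(v,w)=0$ with $v,w$ both lightlike forces $v$ to be a real multiple $\lambda(\bi+\bj)$ of $w$; here conjugation by the boost $\cosh t+\sinh t\,\bk$ scales $\bi+\bj$ by $e^{2t}$ (one checks $(\cosh t+\sinh t\,\bk)(\bi+\bj)=e^{t}(\bi+\bj)$ and $(\bi+\bj)(\cosh t-\sinh t\,\bk)=e^{t}(\bi+\bj)$), and, when $\lambda<0$, conjugation by $\bk$ sends $\bi+\bj$ to $-(\bi+\bj)$; combining these produces the required invertible $q$.

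Finally I would note the alternative route staying entirely inside the machinery of Section \ref{mpinvsec}: since $a_0=b_0$ and $K(a)=K(b)$, Theorem \ref{thmk0} already describes the full two-dimensional solution space of $xa=bx$ via (\ref{thmk0e}), and one only has to select $y$ so that $I_x\neq0$; the computation above pins down such a $y$ explicitly and shows it always exists. Either way, the sole genuine difficulty is ruling out that the whole solution space lies in the zero-divisor cone $Z(\bh_s)$, and the degenerate-case analysis settles precisely this point.
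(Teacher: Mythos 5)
Your proof is correct, but it takes a genuinely different route from the paper's. The paper does not prove the $K(a)\neq 0$ cases at all: it cites Kula--Yayli (Propositions 5.1 and 5.2 of \cite{yayli}) for those, and handles only $K(a)=0$ by a two-step reduction --- first conjugating $a$ to the intermediate form $a_0+a_1\bi-a_1\bj$ (splitting into $a_3=0$, handled by the sign-flips (\ref{ijk}), and $a_3\neq 0$, handled by feeding a cleverly chosen $y$ into formula (\ref{thmk0e}) of Theorem \ref{thmk0}), and then conjugating $a_0+a_1\bi-a_1\bj$ to $a_0+\bi+\bj$ (again via (\ref{ijk}) when $a_1=\pm1$, and via Theorem \ref{thmk0} with an explicit $y$ otherwise). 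Your argument instead gives a uniform, self-contained construction covering all three signatures at once: $q=\Im(a)+\Im(b)$ intertwines $a$ and the target $b$ whenever $K(a)=K(b)$, and invertibility fails only on an explicit degenerate locus, which you then clear --- for $K(a)\neq 0$ by passing to $\Im(a)-\Im(b)$ (whose square is forced to be $4K(a)\neq 0$) composed with the unit flip $\bi$ or $\bj$, and for $K(a)=0$ by showing degeneracy forces $\Im(a)=\lambda(\bi+\bj)$ and then rescaling with the boost $\cosh t+\sinh t\,\bk$ (plus a $\bk$-flip when $\lambda<0$). What each approach buys: the paper's route exercises its own machinery (Theorem \ref{thmk0}) and leans on the literature for the nondegenerate cases, while yours is more elementary, avoids the external citation entirely, and makes transparent exactly where and why a conjugator can fail to be invertible. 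One cosmetic slip: for $K(a)>0$ and $q=\bi(\Im(a)-\Im(b))$ one gets $I_q=I_{\bi}\,I_{\Im(a)-\Im(b)}=-(\,\Im(a)-\Im(b))^2=-4K(a)$, not $4K(a)$; since all that matters is $I_q\neq 0$, this does not affect the argument.
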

\begin{proof}
	Kula and Yayli \cite[Propositions 5.1 and 5.2]{yayli} treated the case $K(a)\neq 0$. We only need to consider the case $K(a)=0$. We at first show that we can conjugate $a$ to $a_0+a_1\bi-a_1\bj$.

If $a_3=0$ then  $a=a_0+a_1\bi+a_2\bj$ and $|a_1|=|a_2|$. Note that \begin{equation}\label{ijk}\bi(a_1\bi+a_2\bj)\bi^{-1}=a_1\bi-a_2\bj,\  \bj(a_1\bi+a_2\bj)\bj^{-1}=-a_1\bi+a_2\bj,\ \bk(a_1\bi+a_2\bj)\bk^{-1}=-a_1\bi-a_2\bj.\end{equation}
Hence we can find a $p\in \bh_s-Z(\bh_s)$ such that $pap^{-1}=a_0+a_1\bi-a_1\bj$.

If $a_3\neq 0$ then $a_1^2=a_2^2+a_3^2>a_2^2$. In Theorem \ref{thmk0}, letting $b=a_0+a_1\bi-a_1\bj$ and taking $y=2a_1(1+\bk)$ in (\ref{thmk0e}), we get $p=a_1-a_2+a_3\bi,I_p=2a_1(a_1-a_2)\neq 0.$
That is  $pap^{-1}=a_0+a_1\bi-a_1\bj$.

If $a_1=1$ or $a_1=-1$, then by (\ref{ijk}), we can find a   $q$ such that $qaq^{-1}=a_0+\bi+\bj$. Otherwise, letting $a=a_0+a_1\bi-a_1\bj$ and $b= a_0+\bi+\bj$ and taking $y=\frac{2(a_1^2+1)\bi}{a_1+1}$ in (\ref{thmk0e}), we get  $q=(1+a_1)\bi+(1-a_1)\bj$ such that $q(a_0+a_1\bi-a_1\bj)q^{-1}=a_0+\bi+\bj$.
\end{proof}

The fact that a real number is similar to itself, together with Lemma \ref{similar}, implies the following theorem.

\begin{thm}\label{thmsim}
	Two split quaternions $a,b \in \bh_s-\br$ are similar if and only if
	$$\Re(a)=\Re(a), K(a)=K(a).$$
\end{thm}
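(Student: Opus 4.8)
The plan is to prove both directions by reducing everything to the canonical forms furnished by Lemma \ref{similar}, after first checking that similarity is an equivalence relation. The relation is symmetric because $q \notin Z(\bh_s)$ forces $q^{-1} \notin Z(\bh_s)$ (indeed $I_{q^{-1}} = 1/I_q \neq 0$ since $I_{pq} = I_p I_q$), and $qa = bq$ rearranges to $q^{-1} b = a q^{-1}$; it is transitive because composing $qaq^{-1} = b$ with $pbp^{-1} = c$ gives $(pq)a(pq)^{-1} = c$ with $I_{pq} = I_p I_q \neq 0$. I regard these as routine.

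For the forward (necessity) direction, suppose $b = qaq^{-1}$. By Proposition \ref{zdivision}, $a$ annihilates the polynomial $t^2 - 2\Re(a) t + I_a$, and conjugating the relation $a^2 - 2\Re(a)a + I_a = 0$ by $q$ shows $b$ annihilates the same polynomial, i.e. $b^2 - 2\Re(a) b + I_a = 0$. On the other hand $b^2 - 2\Re(b) b + I_b = 0$. Subtracting yields the linear relation $2(\Re(b) - \Re(a))\, b = I_b - I_a$, whose right-hand side is real. Since $b \in \bh_s - \br$ is non-real, the coefficient $\Re(b) - \Re(a)$ must vanish, whence $\Re(a) = \Re(b)$ and then $I_a = I_b$. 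Finally, because $K(q) = \Re(q)^2 - I_q$ holds for every split quaternion, these two equalities give $K(a) = K(b)$.

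For the reverse (sufficiency) direction, assume $\Re(a) = \Re(b)$ and $K(a) = K(b)$. Lemma \ref{similar} sends each of $a$ and $b$ to a canonical representative determined solely by its real part $a_0$ and by the value and sign of $K$: one of $a_0 + \sqrt{K}\,\bj$, $a_0 + \sqrt{-K}\,\bi$, or $a_0 + \bi + \bj$. Under the hypotheses these representatives coincide, so $a$ and $b$ are each similar to one and the same canonical element; symmetry and transitivity then deliver $a$ similar to $b$.

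The only genuinely delicate step is the necessity argument: one must verify that subtracting the two quadratic relations leaves a \emph{linear} relation in $b$ with a \emph{real} leading coefficient, for it is precisely the reality of that coefficient, together with $b \notin \br$, that forces $\Re(a) = \Re(b)$. Once this is in hand, the identity $K = \Re(\cdot)^2 - I$ converts the preserved quantities $\Re$ and $I$ into the preserved pair $\Re$ and $K$, and the sufficiency direction is immediate from Lemma \ref{similar}.
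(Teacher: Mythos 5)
Your proof is correct. For the sufficiency direction you follow exactly the paper's route: Lemma \ref{similar} reduces both $a$ and $b$ to a canonical representative determined only by the real part and by $K$, and equality of these invariants makes the two representatives coincide; your explicit check that similarity is symmetric and transitive (via $I_{q^{-1}}=1/I_q$ and $I_{pq}=I_pI_q$) is a point the paper takes for granted. The genuine difference is in the necessity direction. The paper disposes of the whole theorem with the single remark that ``the fact that a real number is similar to itself, together with Lemma \ref{similar}, implies the theorem,'' and never actually exhibits why $\Re$ and $K$ are invariants of the similarity class. You prove this outright: conjugating the quadratic relation $a^2-2\Re(a)a+I_a=0$ of Proposition \ref{zdivision} shows that $b$ satisfies both $b^2-2\Re(a)b+I_a=0$ and $b^2-2\Re(b)b+I_b=0$, and subtracting these forces $\Re(a)=\Re(b)$ and $I_a=I_b$, precisely because the leading coefficient $2(\Re(b)-\Re(a))$ is real and $b\notin\br$; the identity $K(q)=\Re(q)^2-I_q$ then converts the preserved pair $(\Re,I)$ into $(\Re,K)$. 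This minimal-polynomial argument is a clean, self-contained invariance proof (one could equally use the trace property $\Re(pq)=\Re(qp)$ together with multiplicativity of $I$), and it fills a gap that the paper leaves implicit. Incidentally, the theorem as printed contains a typo --- the condition should read $\Re(a)=\Re(b)$, $K(a)=K(b)$ rather than $\Re(a)=\Re(a)$, $K(a)=K(a)$ --- and your proof correctly addresses the intended statement.
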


Note that $xa=b\bar{x}$ is equivalent to $\big(R(a)-L(b)F\big)\overrightarrow{x}=0.$  We can verify the following proposition directly.
\begin{pro}\label{propsab}
  Let $F=diag(1,-1,-1,-1)$ and $S(a,b)=R(a)-L(b)F$.	
Then the eigenvalues of $S(a,b)$ are
$$\lambda_{1,2}=a_0 \pm \big(K(a)-K(b)+b_0^2\big)^\frac{1}{2}=a_0 \pm \big(K(a)+I_b\big)^\frac{1}{2},$$
$$\lambda_{3,4}=a_0+b_0\pm \big(K(a)+K(b)+2(a_1b_1-a_2b_2-a_3b_3)\big)^\frac{1}{2}$$
and
$$\det(S(a,b))=\Pi_i^4\lambda_i=(I_a-I_b)\big(I_a+I_b+2(a_0b_0-a_1b_1+a_2b_2+a_3b_3)\big)=(I_a-I_b)I_{\bar{a}+b}.$$	
$\det(S(a,b))=0$ if and only if one of the following conditions holds:
\begin{itemize}
	\item [(1)] $I_a=I_b$ and $\bar{a}+b=0$; in this case $\mathrm{rank}(S(a,b))=1$.
\item [(2)]  $I_a=I_b$ and $I_{\bar{a}+b}\neq 0$; in this case   $\mathrm{rank}(S(a,b))=3$.
\item [(3)]  $I_a=I_b$ and $0\neq \bar{a}+b\in Z(\bh_s)$; in this case   $\mathrm{rank}(S(a,b))=3$.
\item [(4)] $I_a\neq I_b$ and $0\neq \bar{a}+b\in Z(\bh_s)$;  in this case   $\mathrm{rank}(S(a,b))=3$.
\end{itemize}
 \end{pro}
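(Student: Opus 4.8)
The plan is to treat $S(a,b)=R(a)-L(b)F$ as the real-linear endomorphism $x\mapsto xa-b\bar{x}$ of $\bh_s\cong\br^4$, since $\bar{x}$ corresponds to $F\overrightarrow{x}$ with $F=\mathrm{diag}(1,-1,-1,-1)$. Thus $\ker S(a,b)$ is exactly the solution space of $xa=b\bar{x}$, and I will (i) compute the eigenvalues, (ii) read off $\det S(a,b)$ as their product, and (iii) determine the rank case by case by counting solutions directly.

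For the eigenvalues I would avoid the raw $4\times4$ determinant by passing to the complex form $x=z_1+z_2\bj$, $a=c_1+c_2\bj$, $b=u_1+u_2\bj$ with $z_i,c_i,u_i\in\bc$. Using $\bj z=\bar{z}\bj$ and $\bj^2=1$ one gets
\[ xa=(c_1z_1+\bar{c_2}z_2)+(c_2z_1+\bar{c_1}z_2)\bj,\qquad b\bar{x}=(u_1\bar{z_1}-u_2\bar{z_2})+(u_2z_1-u_1z_2)\bj. \]
Hence $S(a,b)$ acts on $v=(z_1,z_2)^T\in\bc^2$ as $v\mapsto Av+B\bar{v}$ with
\[ A=\begin{pmatrix} c_1 & \bar{c_2}\\ c_2-u_2 & \bar{c_1}+u_1\end{pmatrix},\qquad B=\begin{pmatrix} -u_1 & u_2\\ 0 & 0\end{pmatrix}. \]
The complex eigenvalues of the underlying real operator are those of the realification $M=\left(\begin{smallmatrix} A & B\\ \bar{B} & \bar{A}\end{smallmatrix}\right)$, and a short computation shows $\det(\lambda E_4-M)$ splits as the product of the two quadratics $\lambda^2-2a_0\lambda+(I_a-I_b)$ and $\lambda^2-2(a_0+b_0)\lambda+I_{\bar{a}+b}$. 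Their roots are the asserted $\lambda_{1,2}$ and $\lambda_{3,4}$; the two stated forms of $\lambda_{1,2}$ agree via $I_b=b_0^2-K(b)$, and $\lambda_{3,4}$ matches once one checks $K(\bar{a}+b)=K(a)+K(b)+2(a_1b_1-a_2b_2-a_3b_3)$ together with $(a_0+b_0)^2-K(\bar{a}+b)=I_{\bar{a}+b}$. (A trace check, $\mathrm{tr}(M)=4a_0+2b_0=\sum_i\lambda_i$, confirms the split.)

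The determinant is then immediate: $\det S(a,b)=\prod_i\lambda_i$ is the product of the constant terms of the two quadratics, namely $(I_a-I_b)$ (using $a_0^2-K(a)=I_a$) times $I_{\bar{a}+b}$. This gives $\det S(a,b)=(I_a-I_b)\,I_{\bar{a}+b}$, so $\det S(a,b)=0$ precisely when $I_a=I_b$ or $I_{\bar{a}+b}=0$, which is exactly the union of the four listed cases.

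The rank analysis is the delicate part, and I would handle it directly from the two displayed relations rather than from the eigenvalues, because the eigenvalue data only give algebraic multiplicities. Writing $xa=b\bar{x}$ componentwise, the $\bj$-part is the holomorphic equation $(c_2-u_2)z_1+(\bar{c_1}+u_1)z_2=0$ while the $\bc$-part mixes $z_i$ with $\bar{z_i}$; solving the pair pins down $\dim\ker S(a,b)$. When $\bar{a}+b=0$ (so $b=-\bar{a}$) both relations degenerate and the solution space is $3$-dimensional, giving $\mathrm{rank}=1$. In the remaining three cases it collapses to a line, giving $\mathrm{rank}=3$. The point to be careful about is that the eigenvalue $0$ may have algebraic multiplicity $2$ (for instance when $a_0=0$ with $I_a=I_b$, or when $\bar{a}+b$ is a nonzero lightlike element), yet its geometric multiplicity is still $1$, i.e.\ it sits in a nontrivial Jordan block. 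This is precisely what the direct count of solutions certifies, and it is the main obstacle: one must verify that $\ker S(a,b)$ is exactly one-dimensional in cases (2)--(4) and exactly three-dimensional in case (1), thereby distinguishing $\mathrm{rank}=3$ from the a priori possible $\mathrm{rank}=2$.
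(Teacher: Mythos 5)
Your setup and the first half of the outline are sound: $S(a,b)$ is indeed the matrix of $x\mapsto xa-b\bar{x}$, your complex matrices $A,B$ are correct, and the characteristic polynomial of the realification does factor as $\bigl(\lambda^2-2a_0\lambda+(I_a-I_b)\bigr)\bigl(\lambda^2-2(a_0+b_0)\lambda+I_{\bar{a}+b}\bigr)$, which yields the stated eigenvalues and determinant. Since the paper gives no argument at all here (it only asserts the proposition ``can be verified directly''), this part of your proposal is actually more explicit than the source.

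The genuine gap is the step you yourself flag and then defer: you never verify that $\ker S(a,b)$ is one-dimensional in cases (2)--(4), and in case (3) this \emph{cannot} be verified, because it is false. Push your own plan through: when $\bar{a}+b\neq 0$, the solutions of the $\bj$-part equation are exactly $x=t(\bar{a}+b)$, $t\in\bc$, and substituting into the $\bc$-part equation gives $tP=\bar{t}Q$, where $P$ is the $\bc$-part of $(\bar{a}+b)a$ and $Q=P-(I_a-I_b)$. In case (3) one gets $Q=P$ and $2\Re(P)=I_{\bar{a}+b}=0$, so the kernel is the real line $\br(\bar{a}+b)$ (rank $3$) when $P\neq 0$, but it is the whole complex line $\bc(\bar{a}+b)$ (rank $2$) when $P=0$; and $P=0$ does occur. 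Take $a=\bi+\bj$, $b=1+\bi+\bj-\bk$: then $I_a=I_b=0$ and $\bar{a}+b=1-\bk$ is a nonzero element of $Z(\bh_s)$, so this is case (3); yet $(1-\bk)a=0=b(1+\bk)$ and $(\bi+\bj)a=0=b\,\overline{(\bi+\bj)}$, so $1-\bk$ and $\bi+\bj=\bi(1-\bk)$ are two independent kernel vectors and $\mathrm{rank}\,S(a,b)=2$, not $3$. So the deferred verification is not a routine computation you left out: it is the point where the proposition itself breaks down (in case (3) the rank is $3$ unless $(\bar{a}+b)a=0$, in which case it drops to $2$). For the record, the same computation does confirm cases (1), (2) and (4): in (2), $2\Re(P)=I_{\bar{a}+b}\neq 0$ forces $P\neq 0$ and $t\in\br$; in (4), $I_{\bar{a}+b}=0$ gives $Q=-\bar{P}$ with $P\neq 0$ (since $P-Q=I_a-I_b\neq 0$), and $\Re(tP)=0$ cuts out a real line; so your claim fails only, but genuinely, in case (3).
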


\begin{exam} Four examples of  $\det(S(a,b))=0$: (1)\quad
$a=1+2\bi+3\bj+4\bk, b=-1+2\bi+3\bj+4\bk$, $\bar{a}+b=0$,  $\mathrm{rank}(S(a,b))=1$; (2)\quad $a=1+2\bi+3\bj+4\bk,b=2+\bi+3\bj+4\bk$, $I_{\bar{a}+b}=10$, $I_a=I_b=-20, \mathrm{rank}(S(a,b))=3$; (3)\quad $a=1+2\bi+3\bj+4\bk, b=-2+\bi+4\bj+3\bk$, $I_{\bar{a}+b}=0$, $I_a=I_b=-20, \mathrm{rank}(S(a,b))=3$;
(4)\quad $a=1+2\bi+3\bj+4\bk,b=2+\bi+3\bk$, $I_a=-20,I_b=-4,I_a\neq I_b$, $I_{\bar{a}+b}=0, \mathrm{rank}(S(a,b))=3$.
\end{exam}

\begin{thm}\label{thmsim}
	Two split quaternions $a,b \in \bh_s-\br$ are consimilar if and only if one of the following two conditions holds:
	$$(1)\quad   \bar{a}+b=0;\  \quad  (2)\quad  I_a=I_b, I_{\bar{a}+b}\neq 0.$$
\end{thm}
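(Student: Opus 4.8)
The plan is to reduce everything to the defining equation $aq=\bar q b$ together with a single multiplication trick. By definition $a,b$ are consimilar iff there is $q\in\bh_s-Z(\bh_s)$ with $aq=\bar q b$; substituting $x=\bar q$ (and using $I_{\bar q}=q\bar q=I_q$) turns this into the solvability of $xb=a\bar x$, i.e. $S(b,a)\overrightarrow{x}=\overrightarrow{0}$, by a non--zero--divisor $x$. Proposition \ref{propsab}, applied with $a$ and $b$ interchanged, gives $\det S(b,a)=(I_b-I_a)I_{\bar a+b}$ (here $I_{\bar b+a}=I_{\overline{\bar a+b}}=I_{\bar a+b}$), so $aq=\bar q b$ has a nonzero solution whenever $I_a=I_b$. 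The device that drives the whole proof is the identity, valid for any solution $q$ of $aq=\bar q b$,
$$\bar q(\bar a+b)=\bar q\bar a+\bar q b=\overline{aq}+aq=2\Re(aq)\in\br,$$
obtained from $\overline{aq}=\bar q\bar a$ and $\bar q b=aq$; taking norms gives $I_q\,I_{\bar a+b}=\big(2\Re(aq)\big)^2$.

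For necessity I would argue as follows. If $q\in\bh_s-Z(\bh_s)$ witnesses consimilarity, then $I_{pq}=I_pI_q$ applied to $aq=\bar q b$ yields $I_aI_q=I_qI_b$, so $I_a=I_b$ since $I_q\neq0$; this already discards every possibility with $I_a\neq I_b$. Next, from $I_q\,I_{\bar a+b}=(2\Re(aq))^2$ with $I_q\neq0$: if $I_{\bar a+b}=0$ then $\Re(aq)=0$, whence $\bar q(\bar a+b)=0$ and, $\bar q$ being invertible, $\bar a+b=0$, which is condition (1); otherwise $I_{\bar a+b}\neq0$, which with $I_a=I_b$ is condition (2). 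Hence consimilarity forces (1) or (2).

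For sufficiency I would treat the conditions in turn. Under (2) we have $I_a=I_b$, so $\det S(b,a)=0$ and there is a nonzero $q_0$ with $aq_0=\bar{q_0}b$; if $q_0$ were a zero divisor the identity would give $0=I_{q_0}I_{\bar a+b}=(2\Re(aq_0))^2$, forcing $\Re(aq_0)=0$ and then $\bar{q_0}(\bar a+b)=0$, so $q_0=0$ (because $\bar a+b$ is invertible), a contradiction; thus $q_0\in\bh_s-Z(\bh_s)$ is a witness. Under (1), $b=-\bar a$ reduces $aq=\bar q b$ to $aq=-\overline{aq}$, i.e. $\Re(aq)=0$, so the solution set is the kernel of the nonzero real functional $q\mapsto\Re(aq)=a_0q_0-a_1q_1+a_2q_2+a_3q_3$, a $3$--dimensional subspace $W$; since the quadratic form $I_q=q_0^2+q_1^2-q_2^2-q_3^2$ has signature $(2,2)$ and hence Witt index $2$, no $3$--dimensional subspace is totally isotropic, so $W\not\subseteq Z(\bh_s)$ and any $q\in W-Z(\bh_s)$ is a witness.

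The main obstacle is conceptual rather than computational: finding the right manipulation of $aq=\bar q b$. Once one multiplies the equation through by $\bar q$ and observes that $\bar a+b$ collapses to the real scalar $2\Re(aq)$, the factor $I_{\bar a+b}$ of $\det S$ is revealed as precisely the obstruction governing whether the forced solutions of $aq=\bar q b$ can be taken outside $Z(\bh_s)$, and it simultaneously delivers the necessity of $I_a=I_b$, the exclusion of the cases with $0\neq\bar a+b\in Z(\bh_s)$, and the non--degeneracy of the witness in case (2). Beyond this identity and the elementary isotropy count used for case (1), I expect no serious difficulty.
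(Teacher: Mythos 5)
Your proof is correct, and it runs on a genuinely different engine from the paper's. Both arguments obtain the necessity of $I_a=I_b$ in the same way (multiplicativity of $I$ applied to the defining equation), and both identify $\bar{a}+b$ as the pivotal quantity, but from there they diverge. The paper notes that $\bar{a}+b$ solves $xa=b\bar{x}$ whenever $I_a=I_b$, relies on the rank classification of Proposition \ref{propsab} to conclude that in the rank-$3$ cases this solution spans the entire solution space (so a witness outside $Z(\bh_s)$ exists precisely when $I_{\bar{a}+b}\neq 0$), and in the case $\bar{a}+b=0$ exhibits three explicit solutions $a_3\bi+a_1\bk$, $a_2\bi+a_1\bj$, $a_1+a_0\bi$, at least one of which is non-isotropic. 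You instead establish the identity $\bar{q}(\bar{a}+b)=2\Re(aq)$, hence $I_q\,I_{\bar{a}+b}=\big(2\Re(aq)\big)^2$, valid for every solution $q$ of $aq=\bar{q}b$; this one identity does all the structural work. It rules out consimilarity when $0\neq\bar{a}+b\in Z(\bh_s)$ (where the paper needs the rank-$3$ statement plus the assertion that $\bar{a}+b$ is a basis of the solution space, both asserted rather than proved), and it shows that under condition (2) any nonzero solution is automatically outside $Z(\bh_s)$, so you need only the determinant formula of Proposition \ref{propsab}, not its rank claims. For condition (1) you replace the paper's explicit witnesses by the observation that the solution set is the $3$-dimensional kernel of $q\mapsto\Re(aq)$, while a subspace on which the signature-$(2,2)$ form $I$ vanishes identically has dimension at most $2$. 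The trade-off: the paper's route is concrete, producing explicit witnesses in every case, while yours is self-contained modulo the determinant formula, at the price of a small input from quadratic-form theory (Witt index). One further simplification available to you under condition (2): $q=\bar{a}+b$ is itself a direct witness, since $a(\bar{a}+b)=I_a+ab=ab+I_b=\overline{(\bar{a}+b)}\,b$ when $I_a=I_b$, and $I_{\bar{a}+b}\neq 0$ by hypothesis; this would let you drop the contradiction argument entirely.
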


\begin{proof}
	If $a,b$ are consimilar then there exists an $x\in  \bh_s-Z(\bh_s)$ such that $xa=b\bar{x}$. Thus $xa\overline{xa}=b\bar{x}x\bar{b}$. That is $I_x(I_a-I_b)=0$, which implies that $I_a=I_b$.
	 It is obvious that $x=\bar{a}+b$ is a solution to $xa=b\bar{x}$.  If $\bar{a}+b=0$ then the following three split quaternions $x=a_3\bi+a_1\bk$, $x=a_2\bi+a_1\bj$ and $x=a_1+a_0\bi$ are solutions to $xa=b\bar{x}$, among them there  is at least one number in  $\bh_s-Z(\bh_s)$. If $x=\bar{a}+b\neq 0$ then it is a basis of the solution space of $xa=b\bar{x}$.   The above observation and Proposition \ref{propsab} imply Theorem  \ref{thmsim}.
\end{proof}

{\bf Acknowledgements.}\quad This work is supported by Natural Science Foundation of China (no:11871379 ) and the Innovation Project of Department of Education of Guangdong Province.


\begin{thebibliography}{99}
	
	\bibitem{cock}J. Cockle, On systems of algebra involving more than one imaginary, Philosophical
Magazine (series 3) 35 (1849) 434-435.
	
	\bibitem{gro} J.Gro{\ss},G, Trenkler,  S. Troschke,   Quaternions:
	further contributions to a matrix oriented approach,   Linear
	Algebra Appl. 326 (2001) 205-213.
	
	
	\bibitem{yayli}L. Kula, Y. Yayli, Split quaternions and rotations in semi Euclidean space $E_2^4$, J. Korean Math. Soc. 44 (2007) 1313-1327.
	
\bibitem{moz} M. $\mathrm{\ddot{O}}$zdemir,   The roots of a split quaternion, Applied Mathematics Letters 22 (2009) 258-263.

\end{thebibliography}
\end{document}